\theoremstyle{plain}
\newtheorem*{theorem*}{Theorem}
\newtheorem*{lemma*} {Lemma}
\newtheorem*{corollary*} {Corollary}
\newtheorem*{proposition*}{Proposition}
\newtheorem*{conjecture*}{Conjecture}
\newtheorem{theorem}{Theorem}[section]
\newtheorem{lemma}[theorem]{Lemma}
\newtheorem*{theorem1*}{Theorem 1}
\newtheorem*{theorem2*}{Theorem 2}
\newtheorem*{theorem3*}{Theorem 3}
\newtheorem{question}[theorem]{Question}
\newcommand\rg{\mathrm{rg}\,}
\newcommand\hg{\mathrm{hg}\,}
\newcommand{\co}{\colon \thinspace}
\theoremstyle{remark}
\newtheorem*{remark}{Remark}
\newtheorem*{definition}{Definition}
\newtheorem{example*}{Example}
\newtheorem*{claim}{Claim}
\theoremstyle{definition}
\def\op{\operatorname}
\def\G{\Gamma}
\def\gl{\mbox{GL}} \def\Q{\Bbb{Q}}  \def\Z{\Bbb{Z}}  
\def\N{\Bbb{N}}   \def\ll{\langle} \def\rr{\rangle}
 \def\a{\alpha}  \def\tor{\mbox{Tor}} \def\bp{\begin{pmatrix}}
\def\sm{\setminus} \def\ep{\end{pmatrix}} \def\bn{\begin{enumerate}} 
   \def\en{\end{enumerate}}
\def\ba{\begin{array}} \def\ea{\end{array}}  
 \def\S{\Sigma}  \def\a{\alpha}  \def\ti{\tilde}
\def\ker{\mbox{Ker}}\def\be{\begin{equation}} \def\ee{\end{equation}} 
 \def\hom{\mbox{Hom}}  
 \def\dim{\mbox{dim}}
 \def\qt{\Q[t^{\pm 1}]}   \def\rk{\op{rk}}
\def\tpm {[t^{\pm 1}]}
\def\ol{\overline}
\def\wti{\widetilde}
\begin{document}
\title{Rank gradients of infinite cyclic covers of 3-manifolds}
\author{Jason DeBlois}
\address{Department of Mathematics, University of Pittsburgh}
\email{jdeblois@pitt.edu}
\thanks{J.~DeBlois was partially supported by NSF grant DMS-1240329}

\author{Stefan Friedl}
\address{Fakult\"at f\"ur Mathematik\\ Universit\"at Regensburg\\   Germany}
\email{sfriedl@gmail.com}

\author{Stefano Vidussi}
\address{Department of Mathematics, University of California,
Riverside, CA 92521, USA} \email{svidussi@math.ucr.edu} \thanks{S. Vidussi was partially supported by NSF grant
DMS-0906281.}
\date{\today}

\begin{abstract}
Given a 3-manifold $M$ with no spherical boundary components, and a primitive class $\phi\in H^1(M;\Z)$, we show that the following are equivalent:
\bn
\item $\phi$ is a fibered class,
\item the  rank gradient of $(M,\phi)$ is zero,
\item  the Heegaard gradient of $(M,\phi)$ is zero.
\en
\end{abstract}

\maketitle

%===================================================
\section{Introduction}

A  \emph{directed $3$-manifold} 
is a pair $(M,\phi)$ where  $M$ is a compact, orientable, connected 3--manifold with toroidal or empty boundary,
and $\phi\in H^1(M;\Z)=\hom(\pi_1(M),\Z)$ is a primitive class, i.e. $\phi$ viewed as a homomorphism $\pi_1(M)\to \Z$ is an epimorphism.
We say that a directed 3-manifold
\emph{$(M,\phi)$ fibers over $S^{1}$}
 if there exists
 a fibration $p\co M\to S^1$ such that the induced map $p_*\co \pi_1(M)\to \pi_1(S^1)=\Z$ coincides with $\phi$.
 We refer to such $\phi$ as a \emph{fibered class}.

It is well-known that the pair $(\pi_1(M),\phi\co \pi_1(M)\to \Z)$ determines whether $\phi$ is fibered or not.
 Indeed, it follows from Stallings' theorem \cite{St62} (together with the resolution of the Poincar\'e conjecture) that $\phi$ is a fibered class if and only if $\ker(\phi\co \pi_1(M)\to \Z)$ is finitely generated.

Stallings' theorem can be generalized in various directions (see e.g.
 \cite[Theorem~5.2]{FV12}, \cite{SW09a,SW09b} and \cite{FSW13}).
Our main result gives a new  fibering criterion which is also a strengthening
 of Stallings' theorem.
In order to state our result we need the notion of rank gradient which was first introduced by M. Lackenby \cite{La05}.
Given a finitely generated group $\pi$  we denote by $\rk(\pi)$ the \emph{rank} of $\pi$, i.e. the minimal number of generators of $\pi$.
If $(M,\phi)$ is a directed $3$-manifold then we write
\[\pi_n=\ker(\pi_1(M)\xrightarrow{\phi}\Z\to \Z/n),\]
and we refer to
\[ \rg(M,\phi):=\liminf_{n\to \infty} \frac{1}{n}\rk(\pi_n)\]
as the \emph{rank gradient} of $(M,\phi)$.  (In the notation of \cite{La05} this is the rank gradient of $(\pi_1 M,\{\pi_n\})$.)

If $\phi$ is a fibered class, then  $\phi$ is dual to a fiber $S$ of a fibration $M\to S^1$ (a connected surface) and 
it is straightforward to show
that  $\rk(\pi_n) \leq 1+\op{genus}(S)$ for any $n$ (see e.g. Lemma \ref{lem:rgbounded}). In particular $\rg(M,\phi)=0$.

Our main result now says that the converse to this statement holds.
More precisely, we will prove the following theorem.

\begin{theorem}\label{mainthm}
Let $(M,\phi)$ be a directed $3$-manifold. Then the following three statements are equivalent:
\bn
\item $\phi$ is fibered,
\item the sequence $\rk(\pi_n)$,  $n\in \N$ is bounded,
\item $\rg(M,\phi)=0$.
\en
\end{theorem}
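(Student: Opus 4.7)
The implications $(1) \Rightarrow (2) \Rightarrow (3)$ are essentially immediate from the preceding discussion. For $(1) \Rightarrow (2)$, if $\phi$ is fibered with fiber $S$, the fibration $M \to S^1$ pulls back to a fibration $M_n \to S^1$ with the same fiber $S$ for each $n$, so $\pi_1(M_n)$ is generated by $\pi_1(S)$ together with a single monodromy element, giving the bound $\rk(\pi_n) \leq 1 + \rk(\pi_1(S))$ anticipated in the text. A bounded sequence $\{\rk(\pi_n)\}$ trivially satisfies $\liminf \rk(\pi_n)/n = 0$, yielding $(2) \Rightarrow (3)$.

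The substance of the theorem lies in $(3) \Rightarrow (1)$, which I would prove by contrapositive: \emph{if $\phi$ is not fibered, then $\rg(M,\phi) > 0$.} By Stallings' theorem (together with the Poincar\'e conjecture), non-fiberedness of $\phi$ is equivalent to $K := \ker(\phi \co \pi_1(M) \to \Z)$ being infinitely generated. Fixing a Thurston norm minimizing surface $S$ dual to $\phi$ and setting $N := M \sm \nu(S)$, this is in turn equivalent to $N$ failing to be the product sutured manifold $S \times [0,1]$; correspondingly $\pi_1(M)$ realizes as an HNN extension $\pi_1(N) *_{\pi_1(S)}$ of $G := \pi_1(N)$ over $H := \pi_1(S)$.

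My plan is to exploit the corresponding graph-of-groups decomposition of the finite cyclic covers: cutting $M_n$ along the preimage of $S$ yields $n$ cyclically identified copies of $N$, so $\pi_1(M_n)$ is the fundamental group of a cycle of $n$ copies of $G$ glued along $n$ copies of $H$. The strategy is to show that because $\iota_+(H)$ has positive codimension in $G$ (the algebraic translation of non-productness of $N$), each of the $n$ copies of $N$ must contribute a uniformly positive amount of ``new'' rank beyond what the edge groups supply, producing a lower bound $\rk(\pi_n) \geq cn$ for some $c > 0$ independent of $n$.

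The principal obstacle is making this last step rigorous: the graph-of-groups structure immediately yields only an \emph{upper} bound $\rk(\pi_n) \leq n\rk(G) + 1$, and lower bounds on ranks of amalgamated and HNN products are notoriously delicate. My approach would be to exploit an invariant more refined than bare rank---perhaps the $\Z[t^{\pm 1}]$-module structure of the Alexander module $H_1(\wti M; \Z[t^{\pm 1}])$ and its behaviour under the covers $M_n \to M$, or the twisted Alexander polynomial technology of Friedl--Vidussi---and to track, for any generating set of $\pi_1(M_n)$, a ``distribution'' over the $n$ copies of $N$ that must grow in size with $n$. Passing from a qualitative algebraic obstruction to fiberedness to a quantitative linear lower bound on rank gradient is what I expect to constitute the heart of the argument.
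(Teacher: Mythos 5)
Your treatment of $(1)\Rightarrow(2)\Rightarrow(3)$ is correct and matches the paper (Lemma \ref{lem:rgbounded}). For the essential implication $(3)\Rightarrow(1)$, however, what you have written is a plan rather than a proof, and the step you yourself flag as the ``principal obstacle'' is exactly where the real content lies. Two concrete problems. First, the untwisted Alexander module $H_1(\wti M;\Z[t^{\pm 1}])$ cannot detect non-fiberedness: there are non-fibered classes (e.g.\ non-fibered knots) whose ordinary Alexander polynomial is nonzero and even monic, so no argument tracking only that module can produce a lower bound on $\rk(\pi_n)$. The obstruction must be extracted from a \emph{finite cover}: the paper invokes Theorem \ref{mainthmfib} (Friedl--Vidussi, resting on Wise's work) to produce an epimorphism $\a\co\pi_1(M)\to G$ onto a finite group with $\Delta_{M,\phi}^\a=0$, passes via Shapiro's lemma (Lemma \ref{lem1}) to the vanishing of the \emph{untwisted} Alexander polynomial of the induced cover $\ti M$, deduces $b_1(\ti\pi_n)\geq n$ (Lemma \ref{lem2}), and then transfers this linear growth back to $\rk(\pi_{dn})$ using the Reidemeister--Schreier inequality. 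Your proposal names this technology as one of two possible routes but does not execute it; the other route you name (the bare Alexander module) does not work.

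Second, your preferred picture --- that in the cyclic graph-of-groups decomposition of $M_n$ each of the $n$ copies of $N$ must contribute uniformly positive ``new'' rank because $\pi_1(S)$ has positive codimension in $\pi_1(N)$ --- is essentially the paper's \emph{first} proof, but making it rigorous is far from formal: one needs Weidmann's acylindrical accessibility theorem (Theorem \ref{thm:weidmann}) to convert acylindricity of the $\pi_n$-action on the Bass--Serre tree into the bound $\rk(\pi_n)\geq (n-1)/2k+1$, and one needs the finite height of $\pi_1(S)$ in $\pi_1(M)$ (Gitik--Mitra--Rips--Sageev), which in turn requires $\pi_1(S)$ to be quasi-Fuchsian, i.e.\ $M$ closed hyperbolic and $S$ a non-fiber. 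The paper only carries this argument out in that special case, and also has to rule out $\chi(G_0)<0$ and degenerate dual graphs separately (Lemma \ref{disconnected case}). As it stands, neither branch of your proposal closes the gap for a general directed $3$-manifold; you would need to actually import Theorem \ref{mainthmfib} and the Betti-number computation of Lemma \ref{lem2} to complete the argument.
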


It follows from the discussion preceding the theorem that it suffices to prove that (3) implies (1).
In fact we will present two quite different approaches to the proof of this statement.

The first, discussed in Section \ref{section:proof1},  uses tools from geometric group theory: acylindrical accessibility and the finite height property.  It applies only to closed hyperbolic manifolds but has the advantage of generalizing more readily to the broader setting of hyperbolic groups, where the separability results used for the general case are not currently available. Moreover, with more work, this approach yields explicit lower bounds on the rank gradient.  In the sequel \cite{De13} the first author refines Theorem \ref{mainthm} in this way for $M$ closed and hyperbolic, bounding $\rg(M,\phi)$ below in terms of the Thurston norm of a non-fibered class $\phi$.

The second proof, discussed in Section \ref{section:proof2}, uses  the  recent proof (see \cite{FV12}) that
given any non-fibered directed 3-manifold $(M,\phi)$ there exists a twisted Alexander polynomial which vanishes.
This proof in turn relies on the recent results of  D. Wise \cite{Wi09,Wi12a,Wi12b}.

To describe our second result, we need to introduce the notion of Heegard gradient. A \emph{Heegaard surface} for a compact $3$-manifold $M$ is an embedded separating surface $S\subset M$
such that the two components of $M$ cut along $S$ are compression bodies.
The minimal genus of a Heegaard surface is called the \emph{Heegaard genus} $h(M)$ of $M$.
Given a class $\phi \in H^1(M;\Z)=\hom(\pi_1(M),\Z)$ we can then define the \emph{Heegaard gradient $hg(M,\phi)$} in a similar fashion to the rank gradient.
We refer to Section \ref{section:hg} for more details. In that section we will also see that
the subsequent theorem is  a straightforward consequence of Theorem \ref{mainthm}.

\begin{theorem}\label{mainthmheegaard}
Let $(M,\phi)$ be a  directed $3$-manifold. Then the following three statements are equivalent:
\bn
\item $\phi$ is fibered,
\item the sequence $h(M_n)$,  $n\in \N$ is bounded,
\item $\hg(M,\phi)=0$.
\en
\end{theorem}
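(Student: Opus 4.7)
The plan is to deduce Theorem \ref{mainthmheegaard} directly from Theorem \ref{mainthm}, together with two standard comparisons: for any compact 3-manifold $N$ one has $\rk(\pi_1(N)) \leq h(N)$, and any fibered directed 3-manifold admits a Heegaard splitting whose genus is bounded in terms of the genus of its fiber. The argument should split into three short implications, two of which are essentially formal.

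For $(1) \Rightarrow (2)$, I would first observe that if $(M,\phi)$ is fibered with fiber $S$ and monodromy $\varphi \co S \to S$, then the $n$-fold cyclic cover $M_n$ corresponding to $\pi_n$ is simply the mapping torus of $\varphi^n$. In particular every $M_n$ fibers over $S^1$ with the \emph{same} fiber $S$. A Heegaard splitting of any mapping torus can be built by taking two parallel pushoffs of the fiber, joined by a vertical tube transverse to the fibration; its genus is bounded purely in terms of the topology of $S$. Hence the sequence $h(M_n)$ is uniformly bounded. The implication $(2) \Rightarrow (3)$ is then immediate from $\hg(M,\phi) = \liminf_n \tfrac{1}{n} h(M_n)$.

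For $(3) \Rightarrow (1)$, I would invoke the rank-genus inequality $\rk(\pi_1(N)) \leq h(N)$: a Heegaard splitting of genus $g$ writes $N$ as a union of two compression bodies, either of which has a free fundamental group generated by at most $g$ elements whose images generate $\pi_1(N)$. Applied to $N = M_n$, we get $\rk(\pi_n) \leq h(M_n)$ for every $n$, so
\[
 \rg(M,\phi) \;\leq\; \hg(M,\phi) \;=\; 0.
\]
Theorem \ref{mainthm} then forces $\phi$ to be fibered.

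The only step requiring real care is the Heegaard bound for fibered manifolds in the case where $M$ has non-empty toroidal boundary: the parallel-fiber-and-tube construction must be adjusted in a neighborhood of the cusps so that the splitting surface meets each boundary torus in the expected way and the complementary pieces are genuine compression bodies. This adaptation is classical, so I do not expect a genuine obstruction — the substance of Theorem \ref{mainthmheegaard} lies entirely in Theorem \ref{mainthm}, and the Heegaard statement is extracted from it by the two inequalities above.
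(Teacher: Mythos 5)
Your proposal follows the paper's route exactly: Theorem \ref{mainthmheegaard} is deduced from Theorem \ref{mainthm} via the two comparison inequalities, which the paper packages as Lemma \ref{lem:hg} (part (3) for the fibered direction and parts (1)--(2) for the converse), and your ``two parallel copies of the fiber joined by a vertical tube'' splitting is the construction used there. One caveat: your justification of $\rk(\pi_1(N))\leq h(N)$ --- that each side of a Heegaard splitting is a compression body with free fundamental group generated by at most $g$ elements --- is only correct when the compression bodies are handlebodies, i.e.\ in the closed case. A compression body $H$ with $\partial_-H\neq\emptyset$ has $\pi_1(H)$ a free product of surface groups and a free group, which is generally not free and may need up to $2g$ generators (e.g.\ $\Sigma_g\times[0,1]$). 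For $N$ with boundary one should instead note that $\pi_1(N)$ is generated by the image of $\pi_1$ of the Heegaard surface, giving $\rk(\pi_1(N))\leq 2h(N)$; this is what the paper proves, and the extra factor of $2$ is harmless since $\rg(M,\phi)\leq 2\hg(M,\phi)=0$ still forces fibering via Theorem \ref{mainthm}.
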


This theorem was proved by M. Lackenby \cite[Theorem~1.11]{La06}
for closed hyperbolic $3$-manifolds. To the best of our knowledge the general case has not been proved before.  The equivalence presented in the abstract immediately follows from  Theorems 1.1 and 1.2 if $M$ has empty or toroidal boundary.  In the general case see Lemma \ref{lem:extendmainthm}.

The equivalence between vanishing of rank and Heegaard gradients holds with no restriction on boundaries.  This is proved at the end of Section \ref{section:proof2}.

\begin{theorem}\label{rankvsheegaard}
For a compact, orientable, connected $3$-manifold $M$ and a primitive class $\phi\in H^1(M;\mathbb{Z})$, $\rg(M,\phi)=0$ if and only if $\hg(M,\phi)=0$.\end{theorem}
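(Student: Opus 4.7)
My strategy is to reduce to the case when $M$ has no spherical boundary components, where the desired equivalence follows from Lemma \ref{lem:extendmainthm} (which, together with Theorems \ref{mainthm} and \ref{mainthmheegaard}, yields the three-way equivalence stated in the abstract). Let $M^\circ$ denote the compact, orientable, connected $3$-manifold obtained from $M$ by filling each spherical boundary component with a $3$-ball; by construction $M^\circ$ has no spherical boundary.

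Since $\pi_1(B^3)=1=\pi_1(S^2)$, Van Kampen gives $\pi_1(M^\circ)\cong\pi_1(M)$, and likewise $H^1(M^\circ;\Z)\cong H^1(M;\Z)$, so the primitive class $\phi$ determines a primitive $\phi^\circ\in H^1(M^\circ;\Z)$. Because $\phi$ vanishes on every sphere component of $\partial M$, each such sphere lifts to $n$ disjoint spherical components in the cyclic cover $M_n$, and $(M^\circ)_n$ is obtained from $M_n$ by capping each of these $n$ spheres with a $3$-ball. In particular $\pi_1((M^\circ)_n)\cong\pi_1(M_n)$ for every $n$, whence $\rg(M^\circ,\phi^\circ)=\rg(M,\phi)$. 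I would then argue $h(M_n)=h((M^\circ)_n)$ similarly: a Heegaard splitting $M_n=C_1\cup_S C_2$ induces a splitting of $(M^\circ)_n$ with the same surface $S$ by absorbing each attached $3$-ball into the compression body that contains the relevant sphere in $\partial_-$, and conversely a Heegaard splitting of $(M^\circ)_n$ restricts to one of $M_n$ of the same genus. This gives $\hg(M^\circ,\phi^\circ)=\hg(M,\phi)$.

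Lemma \ref{lem:extendmainthm} now applies to $M^\circ$ and yields $\rg(M^\circ,\phi^\circ)=0\iff\hg(M^\circ,\phi^\circ)=0$, which combined with the preceding equalities gives $\rg(M,\phi)=0\iff\hg(M,\phi)=0$. The main subtlety I anticipate is the identification $h(M_n)=h((M^\circ)_n)$, which is a routine but convention-dependent check: one needs to verify, using the notion of Heegaard splitting adopted in Section \ref{section:hg}, that absorbing a $3$-ball into a compression body along an $S^2$-component of its inner boundary produces a compression body of the same $\partial_+$.
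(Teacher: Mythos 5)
Your proposal is correct and follows essentially the same route as the paper: fill the spherical boundary components with balls, observe that this operation commutes with passing to the cyclic covers and changes neither the rank of $\pi_1$ nor the Heegaard genus, and reduce to the results already established for manifolds without spherical boundary. The one point where the paper is slightly more careful is the converse direction of $h(M_n)=h((M^\circ)_n)$: a Heegaard surface of $(M^\circ)_n$ need not lie in $M_n$ and must first be isotoped off the filling balls by an innermost disk argument before it ``restricts'' to a Heegaard surface of $M_n$.
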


We will now formulate the last theorem of the paper. Recall that a group $\pi$ is normally generated by a subset $S\subset \pi$ if $\pi$ is the smallest normal subgroup of $\pi$ which contains $S$. We define the \emph{normal rank $n(\pi)$ of $\pi$} to be the smallest cardinality of a normal generating set of $\pi$. The first part of this theorem can also be viewed as a strengthening of Stallings' fibering theorem.

\begin{theorem}\label{mainthmnormal}
\bn
\item If  $(M,\phi)$  is a non-fibered directed $3$-manifold, then $\ker(\phi)$ admits a finite index subgroup with infinite normal rank.
\item There exists a non-fibered directed $3$-manifold $(M,\phi)$, such that $\ker(\phi)$ has finite normal rank.
\en
\end{theorem}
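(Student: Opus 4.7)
The plan splits along the two parts of the theorem. For part (1), I would invoke the twisted Alexander polynomial vanishing result from \cite{FV12} that underlies the second proof of Theorem \ref{mainthm} in Section \ref{section:proof2}: for a non-fibered $(M,\phi)$, there exists an epimorphism $\alpha\co \pi_1(M)\onto Q$ onto a finite group for which the associated twisted Alexander polynomial $\Delta_\phi^\alpha(t)$ vanishes. Let $M_\alpha\to M$ be the finite regular cover with $\pi_1(M_\alpha)=\ker\alpha$, and let $\wti{M_\alpha}\to M_\alpha$ be the infinite cyclic cover associated to $\phi|_{\ker\alpha}$ (which is surjective onto some $m\Z\subset\Z$ with $m\geq 1$, since $\phi$ is primitive and $\ker\alpha$ has finite index). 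Unwinding the definitions, vanishing of $\Delta_\phi^\alpha$ translates to $\dim_\Q H_1(\wti{M_\alpha};\Q)=\infty$.

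Set $H=\pi_1(\wti{M_\alpha})=\ker\alpha\cap\ker\phi$. This is a finite-index subgroup of $K=\ker\phi$ (of index at most $|Q|$), and its abelianization $H^{\mathrm{ab}}$ has infinite $\Z$-rank. If $H$ were normally generated in itself by finitely many elements, the images of these elements would generate $H^{\mathrm{ab}}$ as an abelian group---conjugation acts trivially in the abelianization---contradicting the infinite rank. Hence $n(H)=\infty$, proving part (1).

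For part (2), the plan is to exhibit a single explicit example. Observe first that $n(K)<\infty$ forces $K^{\mathrm{ab}}$ to be finitely generated as an abelian group. For a knot exterior $M=S^3\setminus \nu K_0$ with $\phi$ the abelianization, $K^{\mathrm{ab}}$ is the Alexander module $H_1(M;\Z[t^{\pm 1}])$, which vanishes precisely when $\Delta_{K_0}(t)=1$. Since non-fibered knots with $\Delta=1$ exist---the Kinoshita--Terasaka and Conway knots being classical examples---a natural candidate is the exterior of one such knot, for which $K$ is perfect and the abelianization obstruction to finite normal generation vanishes. The main obstacle, and the principal content of the proof, is then to verify directly that the infinite perfect group $K$ is normally generated in itself by a finite subset. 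I would attempt this by exhibiting an explicit short list of elements of $K$ coming from a small-bridge presentation of $\pi_1(M)$ and arguing that their $K$-normal closure exhausts $K$, leveraging the acyclicity of the infinite cyclic cover $\wti M$ and the specific geometry of a $\Delta=1$ knot; an alternative route would be a direct construction starting from a suitable graph manifold in which $K$ is an explicit semidirect product that one can normally generate by inspection.
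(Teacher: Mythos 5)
Your argument for part (1) is correct and is essentially the paper's own proof: both pass to the cover $\wti{M}_\a$ corresponding to $\ker\a$ for an epimorphism $\a$ with vanishing twisted Alexander polynomial, identify the vanishing with $\dim_\Q H_1$ of the associated infinite cyclic cover being infinite, and use the elementary inequality $n(\pi)\geq b_1(\pi)$ (a normal generating set generates the abelianization). Note only that the subgroup you produce, $\ker\a\cap\ker\phi$, has index at most $|Q|$ in $\ker\phi$ exactly as you say, so this part is complete.

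Part (2) has a genuine gap. You correctly isolate the necessary condition ($K^{\mathrm{ab}}$ finitely generated, satisfied by $\Delta=1$ knots such as Kinoshita--Terasaka), but the statement to be proved is the \emph{sufficiency} of finite normal generation, and you explicitly defer this: ``the main obstacle, and the principal content of the proof, is then to verify directly that the infinite perfect group $K$ is normally generated in itself by a finite subset.'' No argument is given that this holds for the commutator subgroup of a $\Delta=1$ knot group, and it is not clear that it does; perfectness kills the homological obstruction but provides no mechanism for producing a finite normal generating set. The paper avoids this difficulty by building a bespoke example: start with a fibered $(N,\psi)$ with fiber $S$ and monodromy $\varphi$, remove a tubular neighborhood of an essential simple closed curve $C\subset S\times\frac12$, and glue in the exterior of a nontrivial knot $K\subset S^3$, matching the meridian of $K$ to a push-off of $C$ and the longitude to a meridian of $C$. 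The resulting $(M,\phi)$ is non-fibered because the infinite cyclic cover contains incompressible tori (so $\ker\phi$ is not a surface group), yet $\ker\phi$ is normally generated by the finite set of generators of $\pi_1(S\times 0)$: since $C$ lies in $\pi_1(S\times 0)$'s normal closure $\Gamma$ and the meridian of $K$ normally generates $\pi_1(S^3\sm\nu K)$, one gets $\pi_1(W_0)\subset\Gamma$, hence $\pi_1(S\times 1)\subset\Gamma$, and one propagates block by block through the infinite cyclic cover in both directions. If you want to salvage your route, you would need either this kind of explicit propagation argument for your candidate knot or a different construction in which finite normal generation of $\ker\phi$ can actually be verified; as written, the example is only conjectural.
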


\subsection*{Convention.} Unless it says specifically otherwise, all groups are assumed to be finitely generated, all manifolds are assumed to be orientable, connected and compact, and all 3-manifolds are assumed to have empty or toroidal boundary.

\subsection*{Acknowledgment.} We wish to thank Dan Silver for a helpful conversation. We are also grateful to the referee for many helpful comments.

%===================================================
\section{The rank gradient and the Heegaard gradient}

%===================================================
\subsection{The rank gradient}

We start out with the following well-known lemma:

\begin{lemma}
Let $G$ be a finitely generated group and let $H\subset G$ be a finite index subgroup,
then  
\be \label{equ:rs} \rk(H)\leq [G:H]\cdot (\rk(G)-1)+1\leq [G:H]\cdot \rk(G).\ee
\end{lemma}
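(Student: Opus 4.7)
The plan is to reduce to the case where $G$ is free, and then invoke the classical Nielsen--Schreier theorem together with the Schreier index formula. Write $r=\rk(G)$ and $n=[G:H]$, and choose a free group $F$ of rank $r$ together with a surjection $\pi\co F \onto G$. Set $\wti{H}=\pi^{-1}(H)$. Then $[F:\wti{H}]=n$, and $\pi$ restricts to a surjection $\wti H \onto H$, so $\rk(H)\leq \rk(\wti H)$.

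Now I apply the Nielsen--Schreier theorem: every subgroup of a free group is free, so $\wti H$ is free. Moreover, the Schreier index formula states that for a subgroup of index $n$ in a free group of rank $r$, the subgroup has rank exactly $n(r-1)+1$. Thus $\rk(\wti H)=n(r-1)+1=[G:H](\rk(G)-1)+1$, which combined with the previous paragraph gives the first inequality in \eqref{equ:rs}. The second inequality is immediate since $1\leq [G:H]$.

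There is essentially no obstacle to overcome: the content of the lemma is really the Schreier index formula, and the only observation needed to apply it is that ranks can only drop under quotients, so pulling $H$ back to a free cover does not lose information for our purposes. Optionally, one may prefer a direct proof avoiding the lift to a free group by exhibiting an explicit generating set for $H$ via a Schreier transversal: given generators $g_1,\ldots,g_r$ for $G$ and a set $T$ of coset representatives for $H$ with $|T|=n$, the set of Schreier elements $\{t\,g_i\,\overline{tg_i}{}^{-1} : t\in T,\ i=1,\ldots,r\}$ normally generates (indeed generates) $H$, and by eliminating the at most $n$ trivial elements one could in principle sharpen the count; but the clean bound $n(r-1)+1$ already follows from the argument above and is what is needed downstream.
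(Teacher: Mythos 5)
Your proof is correct and is essentially identical to the paper's: both pull $H$ back along a surjection from a free group of rank $\rk(G)$, apply the Nielsen--Schreier theorem and the Schreier index formula to the preimage, and then use that rank does not increase under the quotient map back to $H$. No further comment is needed.
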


\begin{proof}
Let $\alpha\colon F\to G$ be an epimorphism where $F$ is a free group of rank $\rk(G)$. Note that $\alpha^{-1}(H)$ is a subgroup of $F$ of index $d:=[G:H]$. It  follows from elementary properties of the free group that $\alpha^{-1}(H)$ is a free group 
of rank 
\[ d\cdot (\rk(F)-1)=[G:H]\cdot (\rk(G-1)).\]
Since $\alpha$ restricts to an epimorphism from the free group $\alpha^{-1}(H)$ onto $H$ it now follows that 
\[ \rk(H)\leq [G:H]\cdot (\rk(G)-1)+1\leq [G:H]\cdot \rk(G).\]
\end{proof}

We now let $\pi$ be a finitely generated group and let  $\phi\co \pi\to \Z$ be a homomorphism then we write
\[\pi_n:=\ker(\pi_1(M)\xrightarrow{\phi}\Z\to \Z/n),\]
and we refer to
\[ \rg(\pi,\phi):=\liminf_{n\to \infty} \frac{1}{n}\rk(\pi_n)\]
as the \emph{rank gradient} of $(\pi,\phi)$.
It is a consequence of (\ref{equ:rs}) that this limit does indeed exist.
(Note that Lackenby defines the rank gradient using $\frac{1}{n}(\rk(\pi_n)-1)$ instead of $\frac{1}{n}\rk(\pi_n)$, but it is clear that this gives rise to the same limit.)

The following lemma is now an immediate consequence of (\ref{equ:rs}) and the definitions:

\begin{lemma}\label{lem:rg}
Let $\pi$ be a finitely generated group and let  $\phi\co \pi\to \Z$ be a homomorphism.
\bn
\item If $\a\co \G\to \pi$ is an epimorphism, then
\[ \rg(\G,\phi\circ \a)\geq \rg(\pi,\phi).\]
\item If $\G\subset \pi$ is a finite index subgroup, then
\[ \rg(\G,\phi)\leq [\pi:\G]\cdot \rg(\pi,\phi).\]
\en
\end{lemma}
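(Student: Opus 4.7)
My plan is to reduce both statements to the inequality (\ref{equ:rs}) applied to appropriate pairs of congruence subgroups, using elementary observations about how these subgroups behave under surjections and under restriction to finite-index subgroups.

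For part (1), let $\G_n$ denote the $n$-th congruence subgroup of $\G$ with respect to $\phi\circ\a$, and $\pi_n$ the $n$-th congruence subgroup of $\pi$ with respect to $\phi$. The key observation is that $\a$ restricts to an epimorphism $\G_n\onto \pi_n$. Indeed, given $g\in\pi_n$, surjectivity of $\a$ produces $h\in\G$ with $\a(h)=g$; then $\phi\circ\a(h)=\phi(g)\in n\Z$, so $h\in\G_n$. It follows that $\rk(\pi_n)\le \rk(\G_n)$ for every $n$. Dividing by $n$ and taking $\liminf$ yields $\rg(\pi,\phi)\le \rg(\G,\phi\circ\a)$.

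For part (2), set $d=[\pi:\G]$ and let $\G_n$ denote the $n$-th congruence subgroup of $\G$ with respect to the restriction $\phi|_\G$. Unwinding the definitions gives $\G_n=\G\cap \pi_n$, so
\[ [\pi_n:\G_n] \;=\; [\pi_n:\pi_n\cap\G] \;\le\; [\pi:\G] \;=\; d. \]
Applying the inequality (\ref{equ:rs}) to the finite-index inclusion $\G_n\subset \pi_n$ yields $\rk(\G_n)\le d\cdot \rk(\pi_n)$. Dividing by $n$ and taking $\liminf$ gives $\rg(\G,\phi)\le d\cdot \rg(\pi,\phi)$.

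Neither step is expected to present a genuine obstacle; the proof is essentially a pair of bookkeeping exercises in the definitions. The only minor subtleties to watch are the verification that $\a|_{\G_n}$ is genuinely onto in (1) (which uses surjectivity of $\a$ in an essential way), and the verification in (2) that the index bound survives intersection, together with the mild point that $\phi|_\G$ need not surject onto $\Z$ but the rank gradient is still defined for any homomorphism to $\Z$.
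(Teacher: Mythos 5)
Your proof is correct and follows exactly the route the paper intends: the authors state the lemma as ``an immediate consequence of (\ref{equ:rs}) and the definitions'' without writing out details, and your argument (surjectivity of $\a|_{\G_n}\colon\G_n\to\pi_n$ for (1), and $\G_n=\G\cap\pi_n$ with $[\pi_n:\G_n]\le[\pi:\G]$ plus (\ref{equ:rs}) for (2)) is precisely the intended verification.
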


The following two lemmas show that Theorem \ref{mainthm} is indeed a strengthening
 of Stallings' fibering theorem.

\begin{lemma}\label{lem:rgbounded}
Let $\pi$ be a finitely generated group and let  $\phi\co \pi\to \Z$ be an epimorphism.
If $\ker(\phi)$ is  generated by $k$ elements, then  for any $n\in \N$ we have $\rk(\pi_n)\leq k+1$, in particular $\rg(\pi,\phi)=0$.
\end{lemma}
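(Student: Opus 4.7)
The plan is to exhibit an explicit generating set of size at most $k+1$ for each $\pi_n$. The key observation is that restricting $\phi$ to $\pi_n$ gives a short exact sequence
\[
1 \longrightarrow \ker(\phi) \longrightarrow \pi_n \xrightarrow{\phi|_{\pi_n}} n\Z \longrightarrow 1,
\]
because $\pi_n = \phi^{-1}(n\Z)$ by definition and $\phi$ is surjective.

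First I would fix generators $g_1,\dots,g_k$ of $\ker(\phi)$, which exist by hypothesis. Next I would choose any element $t_n \in \pi_n$ with $\phi(t_n)=n$; such a $t_n$ exists because $\phi|_{\pi_n}$ surjects onto $n\Z$. Given any $x \in \pi_n$, write $\phi(x)=mn$ for some $m \in \Z$; then $x t_n^{-m} \in \ker(\phi)$, so $x \in \langle g_1,\dots,g_k\rangle \cdot t_n^m$. Hence $\{g_1,\dots,g_k,t_n\}$ generates $\pi_n$, giving $\rk(\pi_n)\leq k+1$.

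The rank gradient statement is then immediate: $\rg(\pi,\phi)=\liminf_{n\to\infty}\tfrac{1}{n}\rk(\pi_n)\leq \liminf_{n\to\infty}\tfrac{k+1}{n}=0$, and the liminf is nonnegative by definition, so equality holds.

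There is no real obstacle here; the statement is a standard consequence of the fact that in an extension $1\to A\to B\to C\to 1$ one has $\rk(B)\leq \rk(A)+\rk(C)$, applied to the extension above with $\rk(n\Z)=1$. The only mild subtlety is remembering that $\ker(\phi)\subseteq \pi_n$ for every $n$, which makes the short exact sequence above well-defined.
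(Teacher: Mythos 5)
Your argument is correct and is essentially the paper's own proof: the paper splits $\phi$ to write $\pi = \langle t\rangle \ltimes \ker(\phi)$ and observes that $\pi_n = \langle t^n\rangle \ltimes \ker(\phi)$ is generated by $t^n$ together with the $k$ generators of the kernel, which is exactly your choice of $t_n$ with $\phi(t_n)=n$ plus $g_1,\dots,g_k$. No further comment is needed.
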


\begin{proof}
We write $K=\ker(\phi)$.
Note that the epimorphism $\phi\colon \pi\to \Z=\ll t\rr$ splits since $\ll t\rr$ is in particular a free group.
We can thus  view $\pi$ as a semidirect product  $\pi=\ll t\rr \ltimes K$.
Under this identification we furthermore have  that $\pi_n= \ll t^n\rr \ltimes K$. In particular if $\{g_1,\dots,g_k\}$ is a generating set for $K$,
then $\{t^n,g_1,\dots,g_k\}$ is a generating set for $\pi_n$.
\end{proof}

\begin{lemma}
There exists a finitely presented group $\pi$ and an epimorphism $\phi\colon \pi\to \Z$ such that $\ker(\phi)$
is infinitely generated, but such that $\rk(\pi_n)\leq 2$ for all $n$.
\end{lemma}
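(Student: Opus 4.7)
The plan is to exhibit the Baumslag--Solitar group $\pi := BS(1,2) = \langle a, t \mid t a t^{-1} = a^2 \rangle$, together with the epimorphism $\phi \co \pi \to \Z$ determined by $\phi(a) = 0$, $\phi(t) = 1$. This group is famously finitely presented with infinitely generated kernel, and the relation $tat^{-1}=a^2$ is exactly what lets all the Schreier generators of $\pi_n$ coming from conjugates of $a$ collapse into a single generator.

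First I would verify that $\ker(\phi)$ is infinitely generated. The subgroup $K := \ker(\phi)$ is normally generated by $a$, and conjugation by $t$ induces multiplication by $2$ on $a$ via the defining relation. Hence $K$ is abelian, and sending $t^{i} a t^{-i} \mapsto 2^{i}$ gives an isomorphism $K \cong \Z[1/2]$, which is not finitely generated.

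Next I would compute a generating set for $\pi_n$ by Reidemeister--Schreier using the coset representatives $\{1, t, t^2, \dots, t^{n-1}\}$ for the cosets $\pi_n \backslash \pi$. The Schreier generators associated with the generator $t$ of $\pi$ all reduce to $1$ except the one from the representative $t^{n-1}$, which gives $t^{n}$. The Schreier generators associated with $a$ are exactly $t^{i} a t^{-i}$ for $0 \leq i \leq n-1$. Applying the defining relation $t a t^{-1} = a^{2}$ inductively gives $t^{i} a t^{-i} = a^{2^{i}}$, each of which is a power of $a$. Therefore
\[
\pi_n = \langle t^{n}, a, a^{2}, a^{4}, \dots, a^{2^{n-1}} \rangle = \langle t^{n}, a \rangle,
\]
so $\rk(\pi_n) \leq 2$ for every $n \in \N$.

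There is no real obstacle here; the only step that requires any care is confirming that $\ker(\phi) \cong \Z[1/2]$ (equivalently, that the ascending HNN relation does not accidentally introduce torsion or collapse), which follows immediately from the standard normal form for $BS(1,2)$. Everything else is a direct Reidemeister--Schreier calculation.
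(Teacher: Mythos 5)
Your proposal is correct and is essentially the paper's own example: $BS(1,2)$ is isomorphic to the semidirect product $\ll t\rr\ltimes\Z[1/2]$ (with $t$ acting by multiplication by $2$) that the paper uses, and your Reidemeister--Schreier computation showing $\pi_n=\ll t^n,a\rr$ is the presentation-level version of the paper's observation that $\pi_n=\ll t^n\rr\ltimes\Z[1/2]$ is generated by $t^n$ and $1\in\Z[1/2]$. Giving the explicit finite presentation does make the ``finitely presented'' hypothesis transparent, which the paper leaves implicit.
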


\begin{proof}
We consider the semidirect product
\[ \pi:=\ll t\rr \ltimes \Z[1/2]\]
where $t^n$ acts on $\Z[1/2]$ by multiplication by $2^n$ together with the epimorphism $\phi\colon \pi \to \Z$
which is defined by $\phi(t^n)=n$ and $\phi(a)=0$ for $a\in \Z[1/2]$.
It is clear that $\ker(\phi)=\Z[1/2]$ is not finitely generated.
On the other hand it is straightforward to see that
\[ \pi_n=\ll t^n\rr \ltimes \Z[1/2]\]
is generated by $t^n$ and $1\in \Z[1/2]$. We thus showed that $\rk(\pi_n)\leq 2$ for all $n$.
\end{proof}

This raises the following question.

\begin{question}
Does there exist  a finitely presented group $\pi$  and  a homomorphism   $\phi\co \pi\to \Z$ such that $\rg(\pi,\phi)=0$
but such that the sequence  $\rk(\pi_n)$ is unbounded?
\end{question}

We conclude this section with the following elementary lemma:

\begin{lemma}\label{lem:rgfree}
Let $F$ be a free group on $k$ generators and $\phi\co F\to \Z$ an epimorphism, then
\[ \rg(F,\phi)=k-1.\]
\end{lemma}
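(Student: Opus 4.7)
The plan is to compute $\rk(\pi_n)$ exactly for each $n$ using the Nielsen--Schreier theory, and then take the limit. Since $\phi\co F\to \Z$ is surjective, the subgroup $\pi_n=\ker(F\to \Z\to \Z/n)$ is of index exactly $n$ in $F$.

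First, I would invoke the Nielsen--Schreier theorem to conclude that $\pi_n$, being a subgroup of the free group $F$, is itself free. Then I would apply the Schreier index formula, which identifies the rank precisely as
\[ \rk(\pi_n)=[F:\pi_n]\cdot (\rk(F)-1)+1=n(k-1)+1.\]
Note that this is the case of equality in the first inequality of (\ref{equ:rs}), so in fact nothing beyond the argument already given for that lemma is needed: the preimage $\alpha^{-1}(\pi_n)$ there is $\pi_n$ itself (with $\alpha=\id$), and the epimorphism from a free group of rank $n(k-1)+1$ onto $\pi_n$ is an isomorphism.

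With this exact value in hand, the rank gradient is computed directly:
\[ \rg(F,\phi)=\liminf_{n\to\infty}\frac{1}{n}\rk(\pi_n)=\liminf_{n\to\infty}\frac{n(k-1)+1}{n}=k-1.\]
No step here is really an obstacle; the only point worth checking is that $[F:\pi_n]=n$, which holds because $\phi$ is surjective and therefore $\phi$ descends to a surjection $F\to \Z/n$ with kernel $\pi_n$.
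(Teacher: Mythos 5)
Your proposal is correct and follows essentially the same route as the paper: both identify $\pi_n$ as an index-$n$ subgroup of $F$, apply the Nielsen--Schreier/Schreier index formula (which the paper justifies via Euler characteristics of covers of graphs) to get $\rk(\pi_n)=n(k-1)+1$ exactly, and then pass to the limit. No issues.
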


The statement of the lemma already appears in \cite{La05}, but for the reader's convenience we provide a proof.

\begin{proof}
It is well-known that any subgroup of $F$ of index $n$ is a free group on $n(k-1)+1$ generators.
(This follows for example for an elementary argument using Euler characteristics of finite covers of graphs.)
The lemma is now an immediate consequence of this observation.
\end{proof}

%===================================================
\subsection{The Heegaard gradient}\label{section:hg}

We now recall several basic definitions and facts on Heegaard splittings of $3$-manifolds.
We refer to \cite{Jo} and \cite{Sc02} for more details.
We start out with several definitions:
\bn
%\item  A \emph{Heegaard surface} for a closed $3$-manifold $M$ is an embedded separating surface $S\subset M$
%such the two components of $M$ cut along $S$ are handlebodies.
\item
 A \emph{compression body $H$} is the result of gluing disjoint $2$-handles to $\S\times [0,1]$,
 where $\S$ is a closed surface,
along $\S\times 1$ and then capping off some spherical boundary components with $3$-balls.
We then write $\partial_+ H=\S\times 0$ and $\partial_-H=\partial H\sm \partial_+H$.
Note that a compression body with $\partial_-H=\emptyset$ is a handlebody.
\item A \emph{Heegaard surface} for a $3$-manifold $M$  is an embedded separating surface $S\subset M$
such the two components of $M$ cut along $S$ are compression bodies $H_1$ and $H_2$ with $\partial_+ H_1=\S=\partial_+ H_2$.
\en
Note that every compact $3$-manifold admits a Heegaard surface (see e.g. \cite[Section~2]{Sc02}).
In the following we refer to the minimal genus of a Heegaard surface as the \emph{Heegaard genus $h(M)$} of $M$.

Furthermore, given a directed $3$-manifold $(M,\phi)$ with corresponding cyclic covers $M_n, n\in \N$ we define,
following \cite{La06}, the \emph{Heegaard gradient} of $(M,\phi)$ to be
\[ \hg(M,\phi):=\liminf_{n\to \infty} \frac{1}{n}h(M_n).\]
Note that if $p\colon \wti{M}\to M$ is a $k$-fold cover, then the preimage of a Heegaard surface is again a Heegaard surface, it now follows easily that $h(\wti{M})\leq k\cdot h(M)$. We therefore see  in particular that the Heegaard gradient is well-defined.

We summarize a few key properties of the Heegaard genus in a lemma.

\begin{lemma}\label{lem:hg}
Let $M$ be a $3$-manifold, then the following hold:
\bn
\item
\[\rk(\pi_1(M))\leq \left\{ \ba{ll} h(M), &\mbox{ if $M$ is closed}, \\ 2h(M), &\mbox{ otherwise.}\ea \right.\]
\item If $\phi\in H^1(M;\Z)$ is a primitive class, then
\[ \rg(\pi_1(M),\phi)\leq \left\{ \ba{ll} hg(M), &\mbox{ if $M$ is closed}, \\ 2hg(M), &\mbox{ otherwise.}\ea \right.\]
\item If $\phi\in H^1(M;\Z)$ is a primitive fibered class, then
\[ h(M)\leq 2\cdot \mbox{genus of the fiber}+1.\]
\en
\end{lemma}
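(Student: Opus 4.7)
The plan is to address the three parts sequentially: part (1) follows from Seifert--van Kampen applied to a minimum genus Heegaard splitting, part (2) is a formal consequence of (1) applied to each cyclic cover, and part (3) requires an explicit construction of a Heegaard splitting from a fibration.

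For part (1), I would fix a minimum genus Heegaard splitting $M=H_{1}\cup_{\Sigma}H_{2}$ with $g=h(M)$ the genus of $\Sigma$. The key structural observation is that each compression body $H_{i}$ is built from $\Sigma\times I$ by attaching only $2$-handles and capping off with $3$-balls, so the inclusion $\Sigma\hookrightarrow H_{i}$ induces a surjection on $\pi_{1}$. Seifert--van Kampen then gives
\[ \pi_{1}(M)\cong \pi_{1}(H_{1})\ast_{\pi_{1}(\Sigma)}\pi_{1}(H_{2}), \]
and since $\pi_{1}(\Sigma)$ surjects onto $\pi_{1}(H_{2})$, this amalgamated product is already a quotient of $\pi_{1}(H_{1})$. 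If $M$ is closed, both $H_{i}$ are handlebodies, so $\pi_{1}(H_{1})\cong F_{g}$ is free of rank $g$, yielding $\rk(\pi_{1}(M))\leq g$. If $M$ has nonempty boundary, $H_{1}$ need not be a handlebody, but $\pi_{1}(H_{1})$ is still a quotient of $\pi_{1}(\Sigma)$ and hence has rank at most $2g$, giving $\rk(\pi_{1}(M))\leq 2g$.

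Part (2) is then immediate: since $M_{n}$ is closed precisely when $M$ is, applying (1) to each cover yields $\rk(\pi_{n})\leq c\cdot h(M_{n})$ for a constant $c\in\{1,2\}$ independent of $n$; dividing by $n$ and taking $\liminf$ gives the stated comparison between $\rg(M,\phi)$ and $\hg(M,\phi)$.

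For part (3), I would construct the splitting by hand from the mapping torus presentation $M\cong S\times[0,1]/(x,1)\sim(\varphi(x),0)$ determined by $\phi$. Take a spine $\Gamma\subset S$ consisting of $2g$ loops based at a point $\ast\in S$ whose complement in $S$ is a disk (where $g$ denotes the genus of the fiber). The $1$-complex
\[ K:=(\Gamma\times\{1/2\})\cup(\{\ast\}\times S^{1})\subset M \]
is a wedge of $2g+1$ circles, and I would take $H_{1}=N(K)$, which is evidently a handlebody of genus $2g+1$, together with $H_{2}=\overline{M\setminus N(K)}$. The main technical obstacle is verifying that $H_{2}$ is indeed a compression body whose $\partial_{+}$ coincides with $\partial H_{1}$: this amounts to showing that the complementary region inherits the correct handle structure from the product $(S\setminus\Gamma)\times S^{1}$, using crucially that $S\setminus\Gamma$ is a disk, and tracking the identification by $\varphi$ on $S\times\{0,1\}$. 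The only delicate case is when $S$ has boundary, in which case the construction is adjusted so that the boundary components of $S$ are absorbed into $\partial_{-}H_{2}$; this does not change the genus count. Once $H_{2}$ is confirmed to be a compression body, the bound $h(M)\leq 2g+1$ is immediate.
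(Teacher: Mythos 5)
Parts (1) and (2) of your proposal are correct and essentially coincide with the paper's argument: the paper phrases the surjections $\pi_1(H_1)\twoheadrightarrow\pi_1(M)$ (closed case) and $\pi_1(\Sigma)\twoheadrightarrow\pi_1(M)$ (bounded case) in terms of attaching $2$- and $3$-handles rather than via Seifert--van Kampen, but the content is the same, and the passage from (1) to (2) is the same routine $\liminf$ comparison. Your closed-case construction in (3) --- a regular neighborhood of (spine of a fiber) together with a vertical circle, whose complement is (fiber minus a disk)$\,\times I$ with a $1$-handle attached --- is the dual description of the paper's splitting (two parallel copies of the fiber, each punctured, joined by two tubes), and it does produce a genus $2g+1$ Heegaard surface; note only that $\{\ast\}\times S^1$ presupposes that the monodromy fixes $\ast$, so you should either isotope it to do so or replace this circle by an arbitrary section of the fibration.

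The genuine gap is in the bounded-fiber case of (3). The assertion that absorbing the boundary of $S$ into $\partial_-H_2$ ``does not change the genus count'' fails when $S$ has $b\geq 2$ boundary components: carrying out either your construction or the paper's (take $H_1$ to be a once-punctured fiber times an interval, union a collar of $\partial M$) yields a splitting surface of genus $2g+b$, not $2g+1$. In fact the bound $2g+1$ cannot hold in general: for $M=P\times S^1$ with $P$ a pair of pants, the fiber has genus $0$, yet $\pi_1(M)\cong F_2\times\Z$ surjects onto $H_1(M;\Z)\cong\Z^3$ and so has rank $3$, whence part (1) forces $h(M)\geq 2>2\cdot 0+1$. (Alternatively, a genus-one splitting accounts for at most two torus boundary components, while $M$ has three.) What you can and should prove is $h(M)\leq 2g+b$, where $b$ is the number of boundary components of the fiber; this is a defect shared by the lemma as stated rather than a flaw peculiar to your approach, and the weaker bound suffices for Theorem \ref{mainthmheegaard}, since $b$ does not change in the tower of cyclic covers and only boundedness of $h(M_n)$ is needed.
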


\begin{remark}
\bn
\item
Note that there exist closed $3$-manifolds with $\rk(\pi_1(M))<h(M)$.
In fact there exist examples of such $3$-manifolds which are Seifert fibered \cite{BZ84}, graph manifolds \cite{We03}, \cite{ScWe07}
and hyperbolic \cite{Li11}. On the other hand  J. Souto \cite{So08} and H. Namazi--J. Souto \cite{NS09} showed that $\rk(\pi_1(N))=h(N)$
 for hyperbolic $3$-manifolds that are  `sufficiently complicated' in a certain sense.
\item  To the best of our knowledge it is not known whether there exist closed $3$-manifold
pairs $(M,\phi)$ with $\rg(M,\phi)<\hg(M,\phi)$.
\item Note that Theorem \ref{mainthmheegaard} is  an immediate consequence of Theorem \ref{mainthm} and Lemma \ref{lem:hg}.
\en
\end{remark}

\begin{proof}
First note that if $M$ is a closed $3$-manifold and $\S$ is a Heegaard surface of genus $g$,
then the compression bodies obtained by cutting $M$ along $\S$ are in fact handlebodies.
We can thus view $M$ as the result of gluing together two handlebodies $H_1,H_2$ with $g$ 1-handles each.
In particular we can build $M$ out of $H_1$ by adding $g$ 2-handles and one 3-handle. Since $\pi_1(H_1)$ is generated by $g$ elements it follows that $\rk(\pi_1(M))\leq g$. This evidently implies (1) and (2) for closed $3$-manifolds.

If $M$ is any  $3$-manifold and $\S$ is a Heegaard surface of genus $g$,
then we can view $M$ as the result of gluing 2-handles and 3-handles to $\S\times [-1,1]$.
It follows that $\pi_1(M)$ is generated by a generating set for $\pi_1(\S)$, i.e. $\rk(\pi_1(M))\leq 2g$. This evidently implies (1) and (2) for
 $3$-manifolds which are not closed.

We now turn to the proof of (3).
Suppose that $\S$ is the fiber of a fibration $M\to S^1$.
We can then identify $M$ with $(\S\times [0,1])/(x,0)\sim (f(x),1))$ for some self-diffeomorphism $f$ of $\S$.
If $M$ is a closed $3$-manifold, then we pick two disjoint disks $D_1$ and $D_2$ on $\S$.  Then
\[ (\S\sm (D_1\cup D_2)) \times 0\,\,\cup\,\, (\S\sm (D_1\cup D_2)) \times \frac{1}{2}\,\,\cup \,\,\partial D_1\times [0,\frac{1}{2}]\,\,\cup\,\, \partial D_2\times [\frac{1}{2},1]\]
is a surface of genus $2g+1$ and it is in fact a Heegaard surface for $M$: it cuts $M$ into
\[ \ba{l}
(\Sigma - \mathit{int}\,D_1)\times[0,1/2]\cup (\mathit{int}\,D_2)\times [1/2,1],\mbox{ and }\\
(\Sigma-\mathit{int}\,{D}_2)\times[1/2,1]\cup (\mathit{int}\,D_1)\times [0,1/2],\ea \]
 each the union of a $1$-handle with a handlebody of the form (bounded surface)$\times$(interval).

If $M$ is not closed then $\Sigma$ has non-trivial boundary and $M$ has toroidal boundary $\partial \Sigma\times[0,1]/(x,0)\sim (f(x),1))$.  Let $\eta$ be a closed, $f$-invariant tubular neighborhood of $\partial \Sigma$ in $\Sigma$, so $N = (\eta\times[0,1])/(x,0)\sim (f(x),1))$ is a tubular neighborhood of $\partial M$ in $M$, and let $D_1$ be a disk in $\Sigma$ disjoint from $\eta$.  Taking $H_1 = (\Sigma - \mathit{int}\,{D}_1)\times [0,1/2] \cup N$, we claim that the frontier $S$ of $H_1$ in $M$ is a Heegaard surface.

A maximal collection of disjoint, non parallel, non boundary-parallel arcs embedded in $\Sigma - (\mathit{int}\,{D}_1\sqcup\eta)$ that each begin and end on $\partial D_1$, gives rise, by crossing with $[0,\frac{1}{2}]$, to a collection $\mathcal{D}$ of disjoint compressing disks for $S$ in $H_1$ with the property that $H_1 - (S\cup\bigcup \{D\in\mathcal{D}\})$ retracts onto $\partial M$.  Thus $H_1$ is a compression body.
It is easy to see that the other side $H_2$ of $S$ in $M$ is a handlebody of the form described in the closed case, and the claim follows.
\end{proof}

%We pick a subsurface $\S'\subset \S$ such that $\partial \S'$ lies in the interior of $\S$ and such $\ol{\S\sm \S'}$ consists of annuli.
%It is then +++++++++ hopefully +++++++++ straightforward to see that
%\[ (\S'\sm (D_1\cup D_2)) \times 0\cup (\S\sm (D_1\cup D_2)) \times \frac{1}{2}\cup \partial \S'\times [0,\frac{1}{2}] \cup \partial D_1\times [0,\frac{1}{2}]\cup \partial D_2\times [\frac{1}{2},1]\]
%is a  Heegaard surface for $M$.

%%%%%%%%%%%%%%%%%%%%%%%%%%%%%%%%%%%%%%%%%%%%%%%%%%%%%%%%%%%%%%%%%%%%%%%%%%%%%%%%%%%%%%%%%%%%%%%%%%%%%%%
\section{Proof of the main theorem for closed hyperbolic $3$-manifolds} \label{trees}\label{section:proof1}
%%%%%%%%%%%%%%%%

Given a finitely generated group $\Gamma$ acting
on a tree $T$, an ``accessibility'' principle relates the combinatorics of $\Gamma\backslash T$ to the structure of $\Gamma$.  \textit{Acylindrical} accessibility, introduced by Z. Sela \cite{Sela}, does not require prior knowledge of the structure of vertex or edge stabilizers, but only that their action on $T$ is ``nice enough'':

\begin{definition} The action $\Gamma \times T \to T$ is \textit{$k$-acylindrical} if no $g \in \Gamma - \{1\}$ fixes a segment of length greater than $k$, and \textit{$k$-cylindrical} otherwise.  \end{definition}

We will later on make use of the following  theorem of R.~Weidmann.

\begin{theorem}[Weidmann, \cite{Weidmann}] \label{thm:weidmann} Let $\Gamma$ be a non-cyclic, freely indecomposable, finitely generated group and $\Gamma \times T \to T$ a $k$-acylindrical, minimal (i.e.~leaving no proper subtree invariant) action.  Then $\Gamma \backslash T$ has at most $1+2k(\rk(\Gamma) - 1)$ vertices.  \end{theorem}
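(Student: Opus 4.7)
I would model a proof on Weidmann's own Nielsen-method approach for actions on trees, which sharpens Sela's acylindrical accessibility theorem in the freely indecomposable, finitely generated setting. Fix a generating tuple $S = (g_1, \dots, g_n)$ of $\Gamma$ with $n = \rk(\Gamma)$. Since $\Gamma$ is finitely generated and acts minimally, the quotient $\Gamma\backslash T$ is already a finite graph: the $\Gamma$-translates of the geodesic segments from a fixed basepoint $v_0$ to each $g_iv_0$ span a $\Gamma$-invariant subtree, which by minimality is all of $T$, so their images cover $\Gamma\backslash T$ with only finitely many orbits of edges. The real task is to bound the number of \emph{vertex} orbits.

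The central step is a Nielsen reduction on $S$. I would consider $S$ up to the standard Nielsen transformations (the moves $g_i\mapsto g_ig_j^{\pm 1}$, $g_i\mapsto g_i^{-1}$, and permutations) and pass to a tuple minimizing a suitable complexity function with values in a well-ordered set --- for instance, the lexicographic pair consisting of the number of vertex orbits in the convex subtree $T_S$ generated by iterating $S$ on $v_0$, followed by the total translation length along $T_S$. The hypotheses of non-cyclicity and free indecomposability enter precisely here: they rule out the degenerate reduced configurations in which some generator becomes trivial, the tuple splits into subtuples with disjoint invariant subtrees (which would exhibit $\Gamma$ as a non-trivial free product by Bass--Serre theory), or the entire tuple fixes a single vertex (which would force $\Gamma$ to lie in a vertex stabilizer and contradict the minimality of the action).

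Once $S$ is Nielsen-reduced, I would bound the vertex orbits directly using $k$-acylindricity. Any segment of $T$ of length exceeding $k$ has trivial stabilizer, so as one follows the path traced by a single generator along $T_S$, the edges carrying non-trivial stabilizer organize into at most $2k$ orbit-distinct groups --- roughly, a collar of length $k$ at each end of a path between fixed subtrees or along an axis. A careful accounting then shows that each additional generator beyond the first contributes at most $2k$ new vertex orbits to $\Gamma\backslash T$; together with the single orbit containing $v_0$, this yields the bound $1 + 2k(\rk(\Gamma) - 1)$.

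The main obstacle will be step two. Elliptic generators, which fix subtrees rather than translate along an axis, complicate the definition of a workable complexity function and the proof that it strictly decreases under every non-trivial Nielsen move; this is where Weidmann's refinement of Sela's argument does the real work. Verifying that non-cyclicity and free indecomposability genuinely eliminate the troublesome reduced configurations --- rather than merely postponing them to a different place in the tree --- is the technical heart of the argument, and the step where I would expect to need to invoke Weidmann's treatment rather than reconstruct it from scratch.
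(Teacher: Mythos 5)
The paper does not prove this statement: it is quoted from Weidmann's article \cite{Weidmann} and used as a black box, so there is no internal argument to compare yours against. Your outline does faithfully reproduce the architecture of Weidmann's actual proof --- a Nielsen-type reduction on generating tuples adapted to group actions on trees, with non-cyclicity and free indecomposability ruling out the degenerate reduced configurations (a trivialized generator, a splitting of the tuple into subtuples with disjoint invariant subtrees, or a global fixed vertex), and $k$-acylindricity controlling how much each successive generator can enlarge the quotient.

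As a self-contained proof, however, it has a gap exactly where you flag one. The entire content of the theorem lives in two places you assert but do not establish: (a) the construction of a complexity on (partitioned) generating tuples, with values in a well-ordered set, that strictly decreases under the allowed moves and treats elliptic generators --- those fixing a vertex or subtree rather than translating along an axis --- on the same footing as hyperbolic ones; and (b) the accounting step showing that when a reduced generator is adjoined to the minimal invariant subtree of the preceding subtuple, acylindricity caps the length of any segment with nontrivial stabilizer in the overlap, so that at most $2k$ new vertex orbits appear per generator. Your phrase ``a careful accounting then shows'' is standing in for several pages of \cite{Weidmann}. Nothing you write is wrong, and since the paper itself also defers entirely to Weidmann your choice is defensible in context; but the text is a roadmap of the citation rather than an independent proof, and the unverified steps are precisely the ones where the hypotheses and the constant $2k$ are actually used.
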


We will use the height of edge stabilizers, a notion from \cite{GMRS}, to bound cylindricity of the action under consideration.

\begin{definition} The \textit{height} of an infinite subgroup $\Lambda$ in $\Gamma$ is $k$ if there is a collection of $k$ essentially distinct conjugates of $\Lambda$ such that the intersection of all the elements of the collection is infinite and $k$ is maximal possible.  (The conjugate of $\Lambda$ by $\gamma$ is \emph{essentially distinct} from the conjugate by $\gamma'$ if $\Lambda\gamma\neq\Lambda\gamma'$.) \end{definition}

\begin{lemma}\label{cylinders vs height}  Suppose a torsion-free group $\Gamma$ acts on a tree $T$ on the left, transitively on edges.  If the stabilizer $\Lambda$ of an edge $e_0$ has height $k$ in $\Gamma$ then the action of $\Gamma$ on $T$ is $k$-acylindrical.\end{lemma}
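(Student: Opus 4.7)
The plan is to argue by contradiction: assuming the action fails to be $k$-acylindrical, I will extract $k+1$ essentially distinct conjugates of $\Lambda$ with infinite common intersection, contradicting the height bound on $\Lambda$.

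Suppose some $g \in \Gamma - \{1\}$ fixes a segment of length greater than $k$, so the segment contains $k+1$ distinct edges $f_0, \ldots, f_k$.  By transitivity of the action on edges, for each $i$ I pick $\gamma_i \in \Gamma$ with $f_i = \gamma_i \cdot e_0$; the stabilizer of $f_i$ is then $\gamma_i \Lambda \gamma_i^{-1}$.  Since $f_i \neq f_j$ forces $\gamma_j^{-1}\gamma_i$ not to fix $e_0$, and hence not to lie in $\Lambda$, the elements $\gamma_0,\ldots,\gamma_k$ occupy pairwise distinct cosets of $\Lambda$.  The collection $\{\gamma_i \Lambda \gamma_i^{-1}\}_{i=0}^{k}$ is therefore essentially distinct in the sense of the definition above; matching the paper's right-coset convention $\Lambda\gamma \neq \Lambda\gamma'$ may require writing $f_i = \gamma_i^{-1} e_0$ instead, but that is a minor bookkeeping adjustment.

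Since $g$ fixes each $f_i$, it lies in every $\gamma_i \Lambda \gamma_i^{-1}$, so $g \in \bigcap_{i=0}^{k} \gamma_i \Lambda \gamma_i^{-1}$.  The torsion-free hypothesis enters precisely here: it upgrades $g \neq 1$ to the statement that $\langle g \rangle$ is infinite, and since $\langle g \rangle$ is contained in the intersection, the intersection itself is infinite.  This produces $k+1$ essentially distinct conjugates of $\Lambda$ with infinite common intersection, contradicting the maximality built into the definition of height $= k$.

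The argument is quite short; the only real subtlety is the coset bookkeeping needed to reconcile the natural parameterization of the $\Gamma$-orbit of $e_0$ with the convention in the definition of essentially distinct conjugates.  The torsion-free hypothesis is used in exactly one step, to convert the existence of a single nontrivial common fixed element into the infinitude required to contradict the height bound.
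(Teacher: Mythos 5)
Your proof is correct and follows essentially the same route as the paper's: use transitivity to identify each edge stabilizer of the fixed segment with a conjugate of $\Lambda$ indexed by a coset, observe that distinct edges yield essentially distinct conjugates, and use torsion-freeness to make $\langle g\rangle$ an infinite subgroup of their intersection, contradicting (or rather bounding) the height. The coset bookkeeping you flag is handled the same way in the paper, which parameterizes an edge $e$ by the right coset $\Lambda\gamma$ of elements carrying $e$ to $e_0$.
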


\begin{proof}  Because the action is transitive on edges, each edge stabilizer is conjugate to $\Lambda$, and the conjugates corresponding to distinct edges are essentially distinct: for an edge $e\neq e_0$, the stabilizer of $e$ in $\Gamma$ is $\gamma^{-1}\Lambda\gamma$, where $\gamma\in \Gamma$ satisfies $\gamma.e = e_0$.  Every element $\lambda\gamma$ of $\Lambda\gamma$ thus also satisfies $(\lambda\gamma).e = e_0$.

Now suppose $\gamma\in\Gamma-\{1\}$ fixes an edge arc of length $n$.  Then $\gamma$, hence also the subgroup $\langle\gamma\rangle$ that it generates, is in the intersection of the conjugates corresponding to the edges of this arc.  Since $\Gamma$ is torsion-free, $\langle\gamma\rangle$ is infinite and $\Lambda$ has height at least $n$.  But $\Lambda$ has height $k$, so it follows that $\Gamma\times T\to T$ is $k$-acylindrical.\end{proof}

Let $(M,\phi)$ be a directed $3$-manifold. We pick a properly embedded oriented surface $S$ in $M$ dual to $\phi$
of minimal complexity. (Here, recall that the complexity of  a surface $S$ with connected components $S_1\cup\dots \cup S_k$ is defined as
$\chi_-(S)=\sum_{i=1}^k \max\{-\chi(S_i),0\}$.)
We also pick a tubular neighborhood $S\times [-1,1]$ of $S$ in $M$.

We view $S^1$ as the topological space underlying a graph $G$, with a single vertex $v$  and a single edge $e$.
Note that there exists a canonical continuous map $p\co M\to G$ given by sending $S\times (-1,1)\to (-1,1)\to e$
and by sending every point in $M\sm S\times (-1,1)$ to $v$.
The induced map $p_*\co \pi_1(M)\to \pi_1(G)=\Z$ is precisely the map given by $\phi\in H^1(M;\Z)=\hom(\pi_1(M),\Z)$.

We denote by $G_0$ the graph which has one vertex for each component of $M\sm S\times (-1,1)$ and one edge for each component of $S\times [-1,1]$
with the obvious attaching maps. Note that there exist canonical maps $q\co M\to G_0$
 and $r\co G_0\to G$
which make the following diagram commute:
\[ \xymatrix{&M\ar[dl]_q\ar[dr]^p\\ G_0\ar[rr]^r&& G.}\]
It is clear from the definitions that all the maps induce epimorphisms on fundamental groups.
In particular  $G_0$ is not a tree and hence its Euler characteristic $\chi(G_0)$ is non-positive.  If $\chi(G_0)$ is negative the conclusion of Theorem \ref{mainthm} requires no machinery.

\begin{lemma}\label{disconnected case}
Let $(M,\phi)$ be a directed $3$-manifold. If the graph $G_0$  has $\chi(G_0)<0$, then $\rg(M,\phi) \geq -\chi(G_0)$.
\end{lemma}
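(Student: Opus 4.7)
The plan is to pass the rank gradient computation from $M$ down to the graph $G_0$, which is a one-dimensional object whose fundamental group is free, and then invoke Lemma \ref{lem:rgfree} directly.

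First I would record that, by construction, $G_0$ is connected (since $M$ is connected and $q\co M\to G_0$ is surjective on the level of underlying spaces) and the induced map $q_*\co \pi_1(M)\to \pi_1(G_0)$ is an epimorphism, as noted just before the statement. Since $G_0$ is a connected finite graph, $\pi_1(G_0)$ is free of rank $1-\chi(G_0)$. Under the hypothesis $\chi(G_0)<0$, this free group has rank $\geq 2$, and the composition $r_*\co \pi_1(G_0)\to \Z$ is an epimorphism since $\phi=p_*=r_*\circ q_*$ is.

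Next I would apply the comparison Lemma \ref{lem:rg}(1) to the epimorphism $q_*\co \pi_1(M)\onto \pi_1(G_0)$ to obtain
\[ \rg(M,\phi)=\rg(\pi_1(M),\,r_*\circ q_*)\geq \rg(\pi_1(G_0),\,r_*).\]
Finally, Lemma \ref{lem:rgfree} applied to the free group $\pi_1(G_0)$ of rank $1-\chi(G_0)$ with epimorphism $r_*$ to $\Z$ gives
\[ \rg(\pi_1(G_0),\,r_*)=(1-\chi(G_0))-1=-\chi(G_0),\]
which combined with the previous inequality yields $\rg(M,\phi)\geq -\chi(G_0)$, as desired.

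There is no real obstacle here: once the setup map $q\co M\to G_0$ and its induced surjection on fundamental groups are in hand, the argument is a two-line application of the monotonicity of rank gradient under surjections (Lemma \ref{lem:rg}(1)) together with the explicit value for free groups (Lemma \ref{lem:rgfree}). The only point requiring a moment of thought is checking that $G_0$ really is connected and that $q_*$ is surjective, both of which are immediate from the fact that $G_0$ is a quotient of the connected space $M$ via a map that is onto edges and vertices.
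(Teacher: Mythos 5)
Your proposal is correct and follows exactly the paper's own argument: apply the monotonicity of rank gradient under the epimorphism $q_*$ (Lemma \ref{lem:rg}(1)) and then compute $\rg(\pi_1(G_0),r_*)=-\chi(G_0)$ via Lemma \ref{lem:rgfree}. The extra checks you record (connectivity of $G_0$, surjectivity of $r_*$, and the rank count $1-\chi(G_0)$) are exactly the points the paper leaves implicit.
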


\begin{proof}
Recall that $q_*:\pi_1(M)\to \pi_1(G_0)$ is an epimorphism, it thus follows from Lemma \ref{lem:rg} that
\[ \rg(M,\phi)\geq \rg(\pi_1(G_0),r_*).\]
The lemma is now an immediate consequence of Lemma \ref{lem:rgfree}.
\end{proof}

Recall that $G_0$ is the underlying graph of a graph of spaces decomposition of $M$, with vertex spaces the components of $M\sm S\times (-1,1)$  and edge spaces those of $S$.  (We use the perspective on graphs of groups and spaces from \cite{ScoWa}; for definitions see p.~155 there.  See also \cite{Serre} and \cite{Tretkoff}).    This has an associated left action of $\pi$ on a tree $T$, without involutions, such that each vertex stabilizer is conjugate to $\pi_1(M)$ for some component $X$ of $\overline{M-S\times (-1,1)}$ and each edge stabilizer to $\pi_1(S_0)$ for some component $S_0$ of $S$ (see \cite[pp.~166--167]{ScoWa}.)

Using this we can now prove the non-trivial implication of Theorem \ref{mainthm} for closed hyperbolic $3$-manifolds.

\begin{theorem}\label{easy closed}
Let $(M,\phi)$ be a directed $3$-manifold where $M$ is a closed hyperbolic $3$-manifold.
If $\phi$ is non-fibered, then $\rg(M,\phi)>0$.
\end{theorem}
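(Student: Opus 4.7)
The plan is to reduce to the case $\chi(G_0)=0$ via Lemma \ref{disconnected case}, and then apply Weidmann's theorem (Theorem \ref{thm:weidmann}) to the action of $\pi_n$ on the Bass--Serre tree $T$ of the decomposition of $M$ along $S$. The key combinatorial input, which holds regardless of the sign of $\chi(G_0)$, is that $\phi$ vanishes on $\pi_1(N_v)$ for each component $N_v$ of $M\sm S\times(-1,1)$ (because a map $M\to S^1$ realizing $\phi$ sends $N_v$ into $S^1\sm\{\pt\}$) and likewise on each $\pi_1(S_e)$ (because $S_e\subset S$ is dual to $\phi$). Hence every vertex and edge stabilizer of the $\pi_1(M)$-action on $T$ is contained in every $\pi_n$, and since $\pi_n\trianglelefteq\pi_1(M)$ has index $n$, an elementary double-coset count collapses $\pi_n\backslash\pi_1(M)/\pi_1(N_v)$ to $\pi_n\backslash\pi_1(M)$ and gives $V(G_n)=n\cdot V(G_0)$, $E(G_n)=n\cdot E(G_0)$ for $G_n:=\pi_n\backslash T$; in particular the quotient grows linearly in $n$.

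Next I would show the action of $\pi_1(M)$ on $T$ is $k$-acylindrical for some finite $k$. Minimal complexity of $S$ makes each component $S_i$ incompressible, and since $M$ is closed hyperbolic and $\phi$ is non-fibered, each $\pi_1(S_i)$ will be quasi-Fuchsian and hence quasi-convex in $\pi_1(M)$. The theorem of Gitik--Mitra--Rips--Sageev \cite{GMRS} then gives finite height $k$ for the finite family $\{\pi_1(S_i)\}$, and the argument of Lemma \ref{cylinders vs height}, carried out for a finite family of conjugacy classes of edge stabilizers rather than a single transitive orbit of edges, yields $k$-acylindricity of the $\pi_1(M)$-action on $T$. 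This passes to the restricted $\pi_n$-action unchanged.

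With this in hand, I would verify the remaining hypotheses of Theorem \ref{thm:weidmann} for the $\pi_n$-action on $T$. Minimality is inherited from $\pi_1(M)$: any hyperbolic element of $\pi_1(M)$ has a power in $\pi_n$ with the same axis, so the $\pi_n$-minimal subtree equals the $\pi_1(M)$-minimal subtree. Non-cyclicity and free indecomposability of $\pi_n$ follow from its being a finite-index subgroup of the one-ended, non-elementary hyperbolic group $\pi_1(M)$ (applying Stallings' ends theorem). Weidmann's bound then reads $nV(G_0)=|V(G_n)|\leq 1+2k(\rk(\pi_n)-1)$, so $\rk(\pi_n)\geq 1+(nV(G_0)-1)/(2k)$, and dividing by $n$ and taking $\liminf$ gives $\rg(M,\phi)\geq V(G_0)/(2k)>0$.

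The main obstacle I anticipate is justifying the quasi-convexity of every $\pi_1(S_i)$. A virtual-fiber component of $S$ would be geometrically infinite, have infinite height in $\pi_1(M)$, and break the acylindricity bound. In the closed hyperbolic setting this should follow from the structure of the Thurston norm unit ball --- a virtual-fiber component of a minimal-complexity $S$ would tend to force $\phi$ into (the closure of) a fibered cone, contradicting non-fiberedness --- but the argument must be carried out carefully, and the cleanest route may be to modify the initial choice of $S$ at the outset so that all of its components are a priori quasi-Fuchsian.
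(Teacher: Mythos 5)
Your overall architecture is the paper's: reduce to $\chi(G_0)=0$ via Lemma \ref{disconnected case}, get acylindricity of the action on the Bass--Serre tree from finite height of quasi-Fuchsian surface subgroups \cite{GMRS} via Lemma \ref{cylinders vs height}, and feed the $\pi_n$-actions into Theorem \ref{thm:weidmann}. Your double-coset count ($V(G_n)=n\cdot V(G_0)$ because all vertex and edge stabilizers lie in $\ker\phi\subset\pi_n$ and $\pi_n$ is normal) is correct and matches what the paper uses implicitly. But as written your proof has a genuine gap, and you have correctly located it yourself: you never actually establish that every component of $S$ is quasi-Fuchsian, and without that the finite-height input to acylindricity is unavailable. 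A secondary gap sits in the minimality step: your axis argument shows the $\pi_n$-minimal subtree equals the $\pi_1(M)$-minimal subtree, but you never verify that the $\pi_1(M)$-action itself is minimal, which can fail if $G_0$ has dead-end branches; Weidmann's bound would then only see the (smaller) quotient of the minimal subtree, and your count $nV(G_0)$ would not be the relevant quantity.

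The paper closes both gaps, and also removes your need to generalize Lemma \ref{cylinders vs height} to several edge orbits, with one argument that you skip: when $\chi(G_0)=0$, minimality of the complexity of $S$ forces $G_0$ to be a single coherently oriented cycle (any subtree hanging off the cycle, and any pair of incoherently oriented edges, would produce a nullhomologous subcollection of components of $S$ whose removal lowers complexity while preserving duality to $\phi$), and surjectivity of $\phi$ then forces $G_0=G$. Thus $S$ is connected and non-separating, the action is edge-transitive (so Lemma \ref{cylinders vs height} applies verbatim and the quotient is a single loop, hence the action is minimal), and the quasi-Fuchsian question collapses to the dichotomy for a single connected embedded incompressible surface in a closed hyperbolic manifold: $S$ is either a fiber or quasi-Fuchsian \cite{Bon86}, and the former is excluded precisely because $\phi$ is non-fibered. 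This is exactly the ``modify the initial choice of $S$ at the outset'' route you gesture at in your last paragraph; no appeal to the structure of the Thurston norm ball is needed beyond minimal complexity. If you insist on working with a disconnected $S$, your pigeonhole extension of Lemma \ref{cylinders vs height} to finitely many orbit classes is salvageable, but you would still have to rule out virtual-fiber components (an embedded geometrically infinite component is a fiber or a semi-fiber; the first contradicts primitivity of the non-fibered $\phi$ once the remaining components are seen to be parallel copies, the second is separating and contradicts minimal complexity), so the $G_0=G$ argument is the shorter path.
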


\begin{proof}
 Let $(M,\phi)$ be a directed $3$-manifold where $M$ is a closed hyperbolic $3$-manifold and where $\phi$ is non-fibered.
 We write $\pi=\pi_1(M)$ and we pick a surface $S$ of minimal complexity dual to $\phi$. Since $M$ is hyperbolic we can and will assume that no component of $S$ is a sphere or a torus. We denote by $G_0$ the graph which was defined above.

  On account of Lemma \ref{disconnected case} we may also assume that the graph $G_0$  has Euler characteristic $0$.  We will show below that $S$ is connected and non-separating; ie, $G_0 = G$.  Assuming this for the moment, let us prove the result.

Since $S$ is not a fiber surface, $\pi_1(S)$ is a quasi-Fuchsian subgroup of $\pi$ (see eg.~\cite{Bon86}).  Therefore by the main theorem of \cite{GMRS}, $\pi_1(S)$ has finite height in $\pi$ (cf.~the Corollary on \cite[p.~322]{GMRS}), so by Lemma \ref{cylinders vs height} the $\pi_1(M)$-action on the tree determined by $S$ is $k$-acylindrical for some $k\in \mathbb{N}$.  This action has quotient $G$, with one edge and vertex, so in particular it is minimal.  Since $M$ is hyperbolic and closed, $\pi$ is also non-cyclic, freely indecomposable, and finitely generated.

For each $n\in\mathbb{N}$, $\pi_n$ also acts on $T$, with quotient a graph $G_n$ with $n$ edges and vertices.  The action of $\pi_n$ inherits $k$-acylindricity from that of $\pi$, and since $\pi_n$  has finite index in $\pi$ its action is also minimal.  It now follows from Theorem \ref{thm:weidmann}  that
\begin{align}\label{gradient bound}  \rk(\pi_n) \geq \frac{n-1}{2k} + 1. \end{align}
We thus see that $\rg(M,\phi)>0$.

We return to showing that $G_0 = G$, assuming $\chi(G_0) =0$.
  Since $G_0$ has Euler characteristic zero, it is homotopy equivalent to its minimal-length closed edge path, call it $\gamma$.  Each edge of $G_0$ that is not in $\gamma$ is contained in a subtree $T_0$ of $G_0$ that intersects $\gamma$ at a single vertex $v_0$ with the property that $T_0-\{v_0\}$ is a component of $G_0-\{v_0\}$.  Since $T_0$ is a subtree, the component of $S$ corresponding to any edge in $T_0$ is nullhomologous.  Removing such a component thus reduces the complexity
 of $S$, so the fact that $S$ has minimal complexity  implies that there are none; ie, that $G_0=\gamma$.

We claim also that all edges of $G_0$ point in the same direction.  Note that identifying $\pi_1(G)$ with $\mathbb{Z}$ requires choosing an orientation for $e$.  This in turn gives an orientation to the interval fibers of each component of $S\times [-1,1]$ or, equivalently, an orientation to each edge of $G_0$.  If these do not all point in the same direction, at least one vertex $v_0$ of $\gamma$ is the initial vertex of each edge containing it.  The sum of the components of $S$ corresponding to these edges is trivial in homology, again contradicting the fact that $S$ has minimal complexity.  The claim follows, and implies that $G_0$ covers $G$.  But $p_*=\phi$ maps onto $\pi_1(G)$, so we must have $G_0=G$.
\end{proof}

%===================================================
\section{Proof of the main theorem for $3$-manifolds with empty or toroidal boundary}\label{section:proof2}

%===================================================
\subsection{Twisted Alexander polynomials}

In this section we quickly recall the definition of twisted Alexander polynomials.
This invariant was initially introduced by
 X. Lin \cite{Lin01}, M. Wada \cite{Wa94} and P. Kirk--C. Livingston \cite{KL99}. We refer to the survey paper \cite{FV10} for a detailed presentation.

Let $M$ be a $3$-manifold, let $\phi\in H^1(M;\Z)=\hom(\pi_1(M),\Z)$  and let $\a\co \pi_1(M)\to G$  be an epimorphism onto a finite group $G$.
We write $\pi=\pi_1(M)$.
 We can now define a left $\Q[\pi]$--module
structure on $\Q[G]\otimes_\Q \qt=:\Q[G]\tpm$ as follows:
\[  g\cdot (v\otimes p):=  (\a(g) \cdot v)\otimes (t^{\phi(g)}p), \]
where $g\in \pi$ and $ v\otimes p \in \Q[G]\otimes_\Q \qt = \Q[G]\tpm$.

Denote by $ \widetilde{M}$ the universal cover of $M$.
We then  use
the representation $\a\otimes \phi$ to regard $\Q[G]\tpm$ as a left $\Q[\pi]$--module.
The chain complex $C_*(\widetilde{M})$ is also a left $\Q[\pi]$--module via deck transformations.
Using the natural involution $g\mapsto g^{-1}$ on the group ring $\Q[\pi]$ we can  view $C_*(\widetilde{M})$ as a right $\Q[\pi]$--module.
 We can therefore consider
the  tensor products
\[ C_*^{\phi\otimes \a}(M;\Q[G]\tpm):=C_*(\tilde{M})\otimes_{\Q[\pi]}\Q[G]\tpm,\]
which form a complex of $\qt$-modules.
We then consider the $\qt$--modules
$$
H_*^{\phi\otimes \a}(M;\Q[G]\tpm) := H_*(C_*^{\phi\otimes \a}(M;\Q[G]\tpm)).
$$
When $\phi$ is understood, then we will drop it from the notation, similarly, if $\a$ is the trivial representation to $\gl(1,\Q)$,
then we will also drop it from the notation. We will later on also consider the  modules $H_*(M;\Q(t))$ and $H_*(M;\qt/(t^k-1))$
which are defined analogously.

 Since $M$ is compact and since $\qt$ is a PID we have an isomorphism
 \[ H_1^{\phi\otimes \a}(M;\Q[G]\tpm)\cong \bigoplus\limits_{i=1}^r\qt/p_i(t)\qt\]
 for some $p_i\in \qt$. We define the \emph{twisted Alexander polynomial} as follows
 \[ \Delta_{M,\phi}^\a:=\prod\limits_{i=1}^rp_i(t)\in \qt.\]
Note that $\Delta_{M,\phi}^\a\in \qt$  is well-defined up to multiplication by a unit in $\qt$.
%Also note that $\Delta_{M,\phi}^\a\ne 0$ if and only if $H_1^{\phi\otimes \a}(M;\Q[G]\tpm)$ is $\qt$-torsion.
%Note that the units of $\qt$ are of the form $rt^i, i\in \Z$, $r$ a unit in $\Q$.
We also adopt the convention that we drop $\a$ from the notation if $\a$ is the trivial representation to $\gl(1,\Q)$.

We will later on make use of the following two facts about (twisted) Alexander polynomials:

\begin{lemma}\label{lem1}
Let $(M,\phi)$ be a directed 3-manifold and let $\a\co \pi_1(M)\to G$ be an epimorphism onto a finite group.
We denote by $p\co \ti{M}\to M$ the corresponding finite cover. We write $\ti{\phi}:=p^*\phi$. Then
\[ \Delta_{M,\phi}^\a=\Delta_{\ti{M},\ti{\phi}}.\]
\end{lemma}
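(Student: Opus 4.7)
The plan is to prove the lemma by identifying the twisted chain complex computing $\Delta^\alpha_{M,\phi}$ with the untwisted (in the sense of no finite group, just $\qt$ coefficients) chain complex on $\tilde M$ computing $\Delta_{\tilde M, \tilde\phi}$. The cover $p\co \tilde M\to M$ corresponds to the subgroup $H:=\ker(\alpha)\subset \pi:=\pi_1(M)$, so $\pi_1(\tilde M)=H$ and $\tilde\phi=\phi|_H$. The key algebraic fact is a Shapiro-type isomorphism of left $\Q[\pi]$-modules
\[
\Q[G]\tpm \;\cong\; \Q[\pi]\otimes_{\Q[H]}\qt_{\tilde\phi},
\]
where $\qt_{\tilde\phi}$ denotes $\qt$ viewed as a left $\Q[H]$-module via $h\cdot t^k:=t^{\tilde\phi(h)+k}$.

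First I would verify this isomorphism explicitly: send $g\otimes t^k \in \Q[\pi]\otimes_{\Q[H]}\qt_{\tilde\phi}$ to $\alpha(g)\otimes t^{\phi(g)+k}\in \Q[G]\tpm$. Well-definedness over $\Q[H]$ uses $\alpha(h)=1$ for $h\in H$, and $\pi$-equivariance is a direct check from the formula defining the $\Q[\pi]$-action on $\Q[G]\tpm$. That the map is a $\Q$-linear bijection follows by comparing bases coming from a set of coset representatives for $H$ in $\pi$.

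With this identification in hand, the tensor-product associativity
\[
C_*(\widetilde{M})\otimes_{\Q[\pi]}\Q[G]\tpm \;\cong\; C_*(\widetilde{M})\otimes_{\Q[\pi]}\bigl(\Q[\pi]\otimes_{\Q[H]}\qt_{\tilde\phi}\bigr)\;\cong\; C_*(\widetilde{M})\otimes_{\Q[H]}\qt_{\tilde\phi}
\]
reduces the twisted complex on $M$ to an $H$-equivariant tensor product. Since the universal cover of $M$ is also the universal cover of $\tilde M$, we have $C_*(\widetilde{M})=C_*(\widetilde{\tilde M})$ as $\Q[H]$-modules, so the right-hand side is precisely the $\qt$-chain complex computing $H^{\tilde\phi}_*(\tilde M;\qt)$. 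Consequently the $\qt$-modules $H^{\phi\otimes\alpha}_1(M;\Q[G]\tpm)$ and $H^{\tilde\phi}_1(\tilde M;\qt)$ are isomorphic, and taking orders gives the equality of twisted Alexander polynomials up to units in $\qt$.

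The only substantive step is confirming the Shapiro-type isomorphism of $\Q[\pi]$-modules; everything else is formal. I do not anticipate a genuine obstacle, as this is the standard induction/restriction identity adapted to the twisting by $\phi$. The care needed is simply in bookkeeping the two commuting sources of $\pi$-action (via $\alpha$ on $\Q[G]$ and via $\phi$ on $\qt$) so that the diagonal action on $\Q[G]\tpm$ is matched correctly on the induced module.
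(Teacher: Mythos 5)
Your proposal is correct and follows exactly the route the paper indicates: the paper's proof simply invokes Shapiro's lemma (citing [FV08a, Lemma~3.3] and [FV10, Section~3]), and your argument is precisely that induction/restriction identity $\Q[G]\tpm\cong\Q[\pi]\otimes_{\Q[H]}\qt_{\ti{\phi}}$ made explicit, followed by the standard tensor associativity. The well-definedness, equivariance, and basis checks you outline all go through, and the isomorphism visibly commutes with multiplication by $t$, so it is an isomorphism of $\qt$-modules and the orders agree up to units, as required.
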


The lemma thus says that we can view a twisted Alexander polynomial of a directed 3-manifold $(M,\phi)$ as an untwisted Alexander polynomial of a corresponding cover of ${M}$. The lemma is a straightforward consequence of the Shapiro lemma. We refer to
\cite[Lemma~3.3]{FV08a} or \cite[Section~3]{FV10} for details.

%We now turn to the last lemma of this section.

\begin{lemma}\label{lem2}
Let $(M,\phi)$ be a directed 3-manifold with $\Delta_{M,\phi}=0$.
Let $n\in \Z$ and let $\pi_n:=\ker(\pi\xrightarrow{\phi}\Z\to \Z/n)$.
Then
\[ b_1(\pi_n)\geq n.\]
\end{lemma}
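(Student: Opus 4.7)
My plan is to interpret the hypothesis $\Delta_{M,\phi}=0$ as the presence of a free $\qt$-summand in $H_1(M;\qt)$, then extract $b_1(\pi_n)$ from this via a change-of-coefficients computation.

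First I will identify $b_1(\pi_n)$ with $\dim_{\Q} H_1(M;\qt/(t^n-1))$. Writing $\pi=\pi_1(M)$, the subgroup $\pi_n$ is precisely $\pi_1(M_n)$ for the $n$-fold cyclic cover $M_n \to M$ determined by $\phi$ mod $n$, so $b_1(\pi_n)=\dim_{\Q} H_1(M_n;\Q)$. Regarding $\qt/(t^n-1)\cong \Q[\Z/n]$ as a left $\Q[\pi]$-module via $\pi\xrightarrow{\phi}\Z\onto \Z/n$, the usual covering-space identification gives $H_*(M_n;\Q)\cong H_*(M;\qt/(t^n-1))$.

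Next I will interpret the hypothesis. Because $\qt$ is a PID and the lemma provides the decomposition $H_1(M;\qt)\cong \bigoplus_{i=1}^r \qt/p_i(t)\qt$, the vanishing $\Delta_{M,\phi}=\prod p_i=0$ forces some $p_i$ to vanish, so $\qt$ itself appears as a direct summand of $H_1(M;\qt)$. Applying the universal coefficient short exact sequence over $\qt$,
\[ 0\to H_1(M;\qt)\otimes_{\qt}\qt/(t^n-1)\to H_1\bigl(M;\qt/(t^n-1)\bigr)\to \Tor_1^{\qt}\bigl(H_0(M;\qt),\qt/(t^n-1)\bigr)\to 0, \]
this free summand contributes a copy of $\qt/(t^n-1)$ to the leftmost term, which has $\Q$-dimension exactly $n$. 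Combined with the identification of the previous step, this yields $b_1(\pi_n)\geq n$.

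There is no serious obstacle; the only point requiring care is making the universal coefficient sequence and the topological identification compatible. The cleanest route is to model $H_*(M;\qt)$ as the $\qt$-homology of the free $\qt$-chain complex $C_*(M_\infty;\Q)$ of the infinite cyclic cover, so that tensoring with $\qt/(t^n-1)$ recovers $C_*(M_n;\Q)$ on the nose and the Tor exact sequence falls out of the standard argument. Alternatively, one can bypass universal coefficients entirely by applying the long exact sequence associated to the short exact sequence of chain complexes $0\to C_*(M_\infty;\Q)\xrightarrow{t^n-1} C_*(M_\infty;\Q)\to C_*(M_n;\Q)\to 0$ and reading off the cokernel of $t^n-1$ acting on the $\qt$-summand of $H_1(M_\infty;\Q)$.
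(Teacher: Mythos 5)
Your proposal is correct and follows essentially the same route as the paper: the vanishing of $\Delta_{M,\phi}$ yields a free $\qt$-summand of $H_1(M;\qt)$, the universal coefficient sequence over the PID $\qt$ then gives $\dim_\Q H_1(M;\qt/(t^n-1))\geq n$, and the identification of this with $b_1(\pi_n)$ (which the paper phrases via Shapiro's lemma, using primitivity of $\phi$ to ensure surjectivity onto $\Z/n$) completes the argument.
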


\begin{proof}
First note that the assumption that  $\Delta_{M,\phi}=0$ implies that  $H_1(M;\qt)\cong \qt\oplus H$
for some $\qt$-module $H$. It now follows from the Universal Coefficient Theorem that for any $n$ we have a short exact sequence
\[ \ba{rcl} 0&\to &H_1(M;\qt)\otimes_{\qt}\qt/(t^n-1)\\
&\to& H_1(M;\qt/(t^n-1))\\
&\to& \tor_{\qt}(H_0(M;\qt),\qt/(t^n-1))\to 0.\ea\]
Since $H_1(M;\qt)\cong \qt\oplus H$ it follows that
\[ \dim_\Q(H_1(M;\qt/(t^n-1)))\geq \dim_\Q(\qt/(t^n-1))=n.\]
Recall that we assumed that $\phi$ is primitive, which implies 
that the map $\pi\xrightarrow{\phi}\Z\to \Z/n$ is surjective.
We can thus apply Shapiro's lemma which in this case states that
\[ H_1(\pi_n;\Q)\cong H_1(\pi;\qt/(t^n-1))\cong H_1(M;\qt/(t^n-1)).\]
\end{proof}

%===================================================
\subsection{Twisted Alexander polynomials and fibered classes}

Let $(M,\phi)$ be a directed 3-manifold  and let   $\a\co \pi_1(M)\to G$ be an epimorphism onto a finite group.
 If $\phi$ is fibered then it was shown by many authors at varying levels of generality
 that $\Delta_{M,\phi}^\a$ is monic, in particular non-zero. We refer to \cite{Ch03,KM05,GKM05,Ki07,FK06,Fr13} for details.

In \cite{FV12}, extending earlier results in \cite{FV08b,FV11a,FV11b}, the following converse was proved.

\begin{theorem}\label{mainthmfib}
Let $(M,\phi)$ be a directed 3-manifold.
If $\phi \in H^1(M)$ is nonfibered, then there exists an epimorphism $\a\co \pi_1(M)\to G$ onto a finite group $G$ such that
\[ \Delta_{M,\phi}^\a =0.\]
\end{theorem}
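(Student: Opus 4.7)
By Lemma \ref{lem1}, the problem is equivalent to producing a finite regular cover $p\co \ti M\to M$ for which the pulled-back class $\ti\phi=p^*\phi$ has vanishing untwisted Alexander polynomial $\Delta_{\ti M,\ti\phi}=0$; equivalently, $H_1(\ti M;\qt)$ has positive $\qt$-rank, i.e.\ the infinite cyclic cover of $\ti M$ determined by $\ti\phi$ has infinite first $\Q$-Betti number. The plan is to construct such a cover using the recent separability results of Wise.

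\textbf{The graph of spaces.} Fix a Thurston-norm-minimizing oriented surface $S\subset M$ dual to $\phi$ with no sphere or disk components, and consider the graph-of-spaces decomposition of $M$ obtained by cutting along $S\times(-1,1)$, as in Section \ref{section:proof1}. By Stallings' theorem, $\phi$ is non-fibered precisely when some inclusion $\pi_1(S_\pm)\hookrightarrow \pi_1(V)$ of an edge group into an adjacent vertex group fails to be surjective; choose such a $V$ and a witness element $g\in \pi_1(V)$ outside the image of $\pi_1(S_\pm)$.

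\textbf{Main step.} Invoke Wise's theorem \cite{Wi09,Wi12a,Wi12b} (with Agol closing the gap in the closed hyperbolic case): $\pi_1(M)$ is virtually compact special, and in particular LERF. Using LERF -- applied iteratively at the finitely many vertices and edges of the graph of spaces as needed -- build a finite quotient $\a\co \pi_1(M)\onto G$ whose corresponding cover $\ti M$ has the property that, in the graph-of-spaces decomposition of $\ti M$ obtained by lifting $S$, the witness $g$ survives as a class in an adjacent vertex group which does not lie in the edge group. A Mayer--Vietoris computation of $H_1(\ti M;\qt)$ for the lifted decomposition then shows that this survival forces a free $\qt$-summand in $H_1(\ti M;\qt)$, whence $\Delta_{\ti M,\ti\phi}=0$. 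The case of non-trivial JSJ decomposition reduces to the hyperbolic pieces on which $\phi$ restricts to a non-fibered class.

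\textbf{Main obstacle.} The principal difficulty is that non-fiberedness, being a purely group-theoretic property of $\ker(\phi)$, need not be visible to any single abelian invariant of $M$: there exist non-fibered classes whose untwisted Alexander polynomial is non-zero. Passing to a carefully chosen finite cover is therefore essential, and the full strength of specialness -- not merely residual finiteness -- is required to separate the relevant subgroups. The delicate point is to arrange matters so that the surviving witness contributes to the $\qt$-torsion-free part of $H_1(\ti M;\qt)$ rather than being absorbed into torsion.
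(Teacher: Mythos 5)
This theorem is not proved in the paper at all: it is quoted from \cite{FV12} (building on \cite{FV08b,FV11a,FV11b}), with the deep input being the separability of embedded surface subgroups due to Wise, Agol and Przytycki--Wise. Measured against that cited proof, your outline does identify the correct strategy: reduce via Lemma \ref{lem1} to making the untwisted polynomial of a finite cover vanish, cut along a norm-minimizing surface $S$ dual to $\phi$, use Stallings to produce a witness $g\in\pi_1(M\sm \nu S)$ outside $\iota_\pm(\pi_1(S))$, separate it in a finite quotient $\a\co\pi_1(M)\onto G$, and conclude by a Mayer--Vietoris rank count. Indeed, once one knows $\a(\iota_-(\pi_1 S))\subsetneq\a(\pi_1(M\sm\nu S))$, the preimage of $S$ in $\ti M$ has strictly more components than the preimage of $M\sm\nu S$, so the underlying graph of the induced decomposition of $\ti M$ has negative Euler characteristic and $H_1$ of the $\ti\phi$-cover is infinite-dimensional; this is the cleanest way to finish and meshes with the paper's Section \ref{section:proof1}.

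However, as written the argument has genuine gaps. First, the assertion that $\pi_1(M)$ is ``virtually compact special, and in particular LERF'' is false for the class of manifolds in the theorem: graph manifold groups need not be LERF (and need not be virtually special). What is true, and is the actual input of \cite{FV12}, is the weaker statement that the specific subgroups $\iota_\pm(\pi_1(S))$ carried by the embedded surface are separable in $\pi_1(M)$; for hyperbolic $M$ this follows from Wise--Agol, but the general case requires Przytycki--Wise, and your proposed fallback --- ``reduce to hyperbolic JSJ pieces on which $\phi$ restricts to a non-fibered class'' --- does not work, since a globally non-fibered $\phi$ can restrict to a fibered class on every hyperbolic piece (non-fiberedness may be caused by a Seifert piece or by the gluings); handling this is one of the main difficulties of \cite{FV11a,FV12}. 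Second, you have not carried out the reduction to the case where $S$ and $M\sm\nu S$ are connected (the analogue of the $G_0=G$ argument in Section \ref{section:proof1}; when they are not, one must check the untwisted polynomial already vanishes), nor the incompressibility/irreducibility points needed to apply Stallings in the witness form you use. Third, the key quantitative step --- that separating the witness forces a free $\qt$-summand of $H_1(\ti M;\qt)$ --- is asserted rather than proved; it requires the explicit $b_0$-count sketched above, not merely ``survival'' of $g$ in the cover. So the skeleton is right, but the load-bearing steps are exactly the ones left unproved.
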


The proof of this theorem relies heavily on the result of D. Wise \cite{Wi12a,Wi12b}
that subgroups of hyperbolic 3-manifolds which are carried by an embedded surface are separable. (See also \cite{AFW12} for precise references.)

%===================================================
\subsection{Proof of Theorem \ref{mainthm}}

As discussed in the introduction, the proof of Theorem \ref{mainthm} reduces to the proof of the
following.

\begin{theorem}\label{mainthm2}
If $(M,\phi)$ is a directed $3$-manifold which is not fibered, then  $\rg(M,\phi)>0$.
\end{theorem}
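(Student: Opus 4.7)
The strategy is to combine the vanishing twisted Alexander polynomial supplied by Theorem \ref{mainthmfib} with the Betti-number bound of Lemma \ref{lem2}.  Since $\phi$ is non-fibered, Theorem \ref{mainthmfib} produces an epimorphism $\a\co \pi:=\pi_1(M)\onto G$ onto a finite group with $\Delta_{M,\phi}^\a=0$.  Set $H:=\ker\a$, let $p\co \tilde M\to M$ be the corresponding finite cover, and write $\tilde\phi:=p^*\phi=\phi|_H$; Lemma \ref{lem1} then gives $\Delta_{\tilde M,\tilde\phi}=0$.

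I would like to apply Lemma \ref{lem2} immediately to $(\tilde M,\tilde\phi)$, but $\tilde\phi$ need not be primitive.  Let $k\ge 1$ be such that $\tilde\phi(H)=k\Z$ and set $\tilde\phi_0:=\tilde\phi/k$, a primitive class on $H$.  A change-of-variables argument ($s\mapsto t^k$) identifies $\Q[t^{\pm 1}]_{\tilde\phi}$ with the flat base change $\Q[s^{\pm 1}]_{\tilde\phi_0}\otimes_{\Q[s^{\pm 1}]}\Q[t^{\pm 1}]$ as $\Q[H]$-modules, so free summands of the Alexander module are preserved in both directions, and hence $\Delta_{\tilde M,\tilde\phi_0}=0$ as well.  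Lemma \ref{lem2} applied to $(\tilde M,\tilde\phi_0)$ then yields $b_1(\ker(\tilde\phi_0\bmod m))\ge m$ for every $m\in\N$.

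To descend back to $(M,\phi)$: setting $d:=\gcd(n,k)$, the relation $\phi|_H=k\tilde\phi_0$ gives the identification
\[
H\cap\pi_n=\{h\in H:\phi(h)\in n\Z\}=\{h\in H:\tilde\phi_0(h)\in (n/d)\Z\}=\ker(\tilde\phi_0\bmod n/d),
\]
so that $\rk(H\cap\pi_n)\ge b_1(H\cap\pi_n)\ge n/d\ge n/k$, where the first inequality is the elementary bound $\rk\ge b_1$.  Since $H\cap\pi_n$ has index at most $[\pi:H]=|G|$ inside $\pi_n$, the Schreier-type bound (\ref{equ:rs}) applied to $H\cap\pi_n\subset \pi_n$ gives $\rk(\pi_n)\ge \rk(H\cap\pi_n)/|G|\ge n/(k|G|)$.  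Dividing by $n$ and taking $\liminf$ produces the uniform bound $\rg(M,\phi)\ge 1/(k|G|)>0$, as required.

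The main technical point will be the bookkeeping around non-primitivity of $\tilde\phi$: both verifying that vanishing of $\Delta$ propagates from $\tilde\phi$ to its primitive reduction $\tilde\phi_0$ via the flat-base-change identification, and carefully matching the finite cyclic covers of $\tilde M$ dual to $\tilde\phi_0$ with the subgroups $H\cap\pi_n$.  Once these are in place, the remaining steps are mechanical applications of (\ref{equ:rs}), Lemmas \ref{lem1} and \ref{lem2}, and $\rk\ge b_1$.
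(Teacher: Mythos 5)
Your proposal is correct and follows essentially the same route as the paper: Theorem \ref{mainthmfib} produces a vanishing twisted Alexander polynomial, Lemma \ref{lem1} converts it to a vanishing untwisted polynomial on the finite cover, Lemma \ref{lem2} gives linear growth of $b_1$ of the congruence kernels there, and the Schreier bound (\ref{equ:rs}) transports this back to $\pi_n$. The only (immaterial) difference is bookkeeping: the paper compares $\ti{\pi}_n$ with $\pi_{dn}$ and bounds along that subsequence, while you identify $H\cap\pi_n$ with $\ker(\ti{\phi}_0 \bmod n/\gcd(n,k))$ for every $n$ directly.
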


\begin{proof}
Let $(M,\phi)$ be a directed $3$-manifold such that $\phi$ is not fibered. We have to show that $\rg(M,\phi)>0$.
By Theorem \ref{mainthmfib}, there exists an epimorphism $\a\co \pi_1(M)\to G$ onto a finite group $G$ such that
\[ \Delta_{M,\phi}^\a =0.\]
We write $\pi:=\pi_1(M)$ and $\ti{\pi}:=\ker(\a)$ and we denote by $\ti{M}$ the cover corresponding to $\ti{\pi}$.
Note that $\phi(\ti{\pi})=d\Z$ for some $d\ne 0\in \Z$.
We write $\ti{\phi}:=\frac{1}{d}p^*(\phi)\in \hom(\ti{\pi},\Z)=H^1(\ti{M};\Z)$.
Note that $\ti{\phi}$ is a primitive class.

For any $n\in \N$ we furthermore write
\[ \pi_n:=\ker\{\pi\xrightarrow{\phi}\Z\to \Z/n\} \mbox{ and } \ti{\pi}_n:=\ker\{\ti{\pi}\xrightarrow{\ti{\phi}}\Z\to \Z/n\}.\]
We now have the following claim.

\begin{claim}
For any $n\in \N$ the group $\ti{\pi}_n$ is a subgroup of $\pi_{dn}$ of index at most $[\pi:\ti{\pi}]$.
\end{claim}

Note that for any $n\in \N$ we have
\[ \ti{\pi}_n:=\ker\{\ti{\pi}\xrightarrow{\ti{\phi}}\Z\to \Z/n\}=
\ker\{\ti{\pi}\xrightarrow{{\phi}}\Z\to \Z/dn\}.\]
We thus see that  the group $\ti{\pi}_n$ is indeed a subgroup of $\pi_{dn}$. 
We thus have the equalities
\[ [\pi:\ti{\pi}]\cdot [\ti{\pi}:\ti{\pi}_n]=[\pi:\ti{\pi}_n]=[\pi:\pi_{dn}]\cdot [\pi_{dn}:\ti{\pi}_n].\]
It now follows from $[\ti{\pi}:\ti{\pi}_n]=n$ and $[\pi:\pi_{dn}]=dn$ that 
\[ [\pi_{dn}:\ti{\pi}_n]=\frac{1}{d}[\pi:\ti{\pi}]\leq [\pi:\ti{\pi}].\]
This concludes the proof of the claim.

It follows from Lemmas \ref{lem1}  that $\Delta_{\ti{M},p^*\phi}=0$,
which in turn implies that $\Delta_{\ti{M},\ti{\phi}}=0$.
It now follows from Lemma \ref{lem2} that
\be \label{equ2}  b_1(\ti{\pi}_n)\geq n \mbox{ for any $n$.}\ee
For any $n$ we thus have by (\ref{equ:rs}) and (\ref{equ2}) that
\[ \frac{1}{n}\rk(\pi_n)\geq \frac{1}{dn}\rk(\pi_{dn})\geq\frac{1}{dn}\frac{1}{[\pi:\ti{\pi}]} \rk(\ti{\pi}_n)
\geq\frac{1}{dn}\frac{1}{[\pi:\ti{\pi}]} b_1(\ti{\pi}_n)
\geq\frac{1}{d}\frac{1}{[\pi:\ti{\pi}]}.\]
It thus follows that  $ \rg(M,\phi)> 0$ as desired.
\end{proof}

Recall that we assumed throughout the paper that $M$ is a compact $3$-manifold with empty or toroidal boundary.
The statement of Theorem \ref{mainthm} does not hold if $M$ has a  spherical boundary component.
Indeed, if $(M,\phi)$ is a fibered directed $3$-manifold, then deleting a 3-ball gives rise to a 3-manifold with the same fundamental group but which is no longer fibered. It is therefore reasonable to restrict ourselves to $3$-manifolds with no spherical boundary components.
Extending \textit{verbatim} the definition of rank gradient to this context, it is straightforward to see that the statement of Theorem \ref{mainthm} applies also to this slightly more general case:

\begin{lemma}\label{lem:extendmainthm}
Let $M$ be a compact $3$-manifold with no spherical boundary components and which has at least one non-toroidal boundary component.
Then $M$ is not fibered and for any primitive $\phi\in H^1(M;\Z)$ we have $\rg(M,\phi)>0$.
\end{lemma}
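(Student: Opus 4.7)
The plan is to handle the two assertions separately using only elementary $3$-manifold topology, without needing the machinery of Theorem \ref{mainthm}.

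The non-fibered statement is immediate. If $p\co M\to S^1$ were a fibration, its restriction to any component of $\partial M$ would fiber that component over $S^1$ with $1$-manifold fiber, hence with circle fibers. A closed orientable surface that fibers over $S^1$ with circle fibers must be a torus, so every boundary component of a fibered $3$-manifold is toroidal. This contradicts the existence of a non-toroidal boundary component.

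For the rank gradient bound, I would use the identity $\chi(M)=\tfrac12\chi(\partial M)$, valid for every compact orientable $3$-manifold (which follows from doubling, since a closed orientable odd-dimensional manifold has vanishing Euler characteristic). Under the hypotheses, every boundary component has genus at least $1$ and at least one has genus at least $2$, so summing the contributions $2-2g_i$ gives $\chi(\partial M)\leq -2$, and hence $\chi(M)\leq -1$.

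Finally I would pass to the cyclic cover $M_n$ corresponding to $\pi_n$. Primitivity of $\phi$ makes this a connected $n$-fold cover, so $\chi(M_n)=n\,\chi(M)\leq -n$, and $M_n$ inherits nonempty boundary from $M$. Combining $b_0(M_n)=1$, $b_3(M_n)=0$, and $b_2(M_n)\geq 0$ with $\chi=b_0-b_1+b_2-b_3$ yields
\[ b_1(M_n) \;\geq\; 1-\chi(M_n) \;\geq\; n+1. \]
Since the rank of a finitely generated group is bounded below by the first Betti number of its abelianization, $\rk(\pi_n)\geq b_1(\pi_n)=b_1(M_n)\geq n+1$, whence $\rg(M,\phi)\geq 1>0$. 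There is no real obstacle here: the hypothesis of a non-toroidal, non-spherical boundary component simply forces $\chi(M)<0$, after which the growth of $b_1(M_n)$ is automatic.
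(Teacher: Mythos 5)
Your argument is correct, and the second half takes a genuinely different route from the paper. For the non-fibered claim both arguments are identical (boundary components of a fibered $3$-manifold must be tori). For the rank gradient, the paper fixes a non-toroidal boundary component $F$, sets $d$ so that $\phi(\pi_1(F))=d\Z$, tracks the connected covers $F_n\subset \partial M_{nd}$, and then invokes the ``half-lives-half-dies'' consequence of Poincar\'e--Lefschetz duality to convert $b_1(F_n)\geq 2n$ into $b_1(M_{nd})\geq dn$; this forces a case distinction between $d>0$ and $d=0$ (the latter left to the reader). You instead observe that the boundary hypotheses force $\chi(\partial M)\leq -2$, hence $\chi(M)=\tfrac12\chi(\partial M)\leq -1$, and then read off $b_1(M_n)\geq 1-\chi(M_n)\geq n+1$ from multiplicativity of $\chi$ together with $b_0=1$, $b_3=0$, $b_2\geq 0$. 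All the steps check out: $H_1(M_n;\Q)\cong H_1(\pi_n;\Q)$, so $\rk(\pi_n)\geq b_1(\pi_n)\geq n+1$ and $\rg(M,\phi)\geq 1$. Your version is cleaner in that it needs no duality argument and no case analysis on how $\phi$ restricts to the boundary, at the cost of only seeing the aggregate Euler characteristic rather than the finer boundary information the paper's method exposes.
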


\begin{proof}

If $M$ fibers over $S^1$, then the boundary components also have to fiber over $S^1$, which means that all boundary components have to be tori.

Now let $M$ be a compact $3$-manifold  which has at least one non-toroidal boundary component $F$
and let  $\phi \in H^1(M;\Z)=\hom(\pi_1(M),\Z)$ be a primitive element. We have to show that $\rg(M,\phi)>0$.

We denote by $d\in \Z_{\geq 0}$
 the unique element such that $\phi(\pi_1(F))=d\Z$.
 We first suppose that $d>0$. Given $n\in \N$ we consider the finite cover $M_{nd}$ of $M$ corresponding to $\pi_1(M)\xrightarrow{\phi}\Z\to \Z/nd$
 and we furthermore consider the cover $F_{n}$ of $F$
corresponding to $\pi_1(F)\to \pi_1(M)\xrightarrow{\phi}d\Z\to d\Z/nd\cong \Z/n$. Note that by the assumption that $d$ is positive the cover $F_n$ is a connected cover of $F$.
By the multiplicativity of the Euler characteristic under finite covers we see that
\[ b_1(F_n)-2=n(b_1(F)-2).\]
Since $F$ is non-spherical and non-toroidal we see in particular that $b_1(F_n)\geq 2n$.

Note that $M_{nd}$ contains $d$ copies of $F_n$ as boundary components. By the standard half-live-half-die argument coming from Poincar\'e duality we deduce that
\[ b_1(M_{nd})\geq \frac{1}{2}(d\cdot b_1(F_n))=dn.\]
It is now obvious that $\rg(M,\phi)\geq 1$.

The case that $d=0$ is proved almost the same way. We leave the details to the reader.
\end{proof}

\begin{proof}[Proof of Theorem \ref{rankvsheegaard}]  For a compact $3$-manifold $M$ let $\widehat{M}$ be obtained from $M$ by filling all spherical boundary components with balls.  The inclusion map $M\to\widehat{M}$ takes Heegaard surfaces to Heegaard surfaces and induces an isomorphism of fundamental groups.  Moreover, every Heegaard surface for $\widehat{M}$ may be isotoped into $M$ by an innermost disk argument, giving a Heegaard surface there.  In particular, the Heegaard genus of $M$ equals that of $\widehat{M}$ as does the rank of $\pi_1$.

Since both spheres and balls lift to covers (having trivial $\pi_1$), if $M'\to M$ is a finite-degree cover then $\widehat{M'}$ is the cover of $\widehat{M}$ corresponding to $\pi_1 M' <\pi_1 M = \pi_1\widehat{M}$.  It follows that the rank and Heegaard gradients of any family of covers $\{M_n\to M\}$ may be computed in the corresponding family $\{\widehat{M}_n\to\widehat{M}\}$, reducing Theorem \ref{rankvsheegaard} to our prior results.\end{proof}

%===================================================
\section{Normal generating sets}

 In this section we will prove Theorem \ref{mainthmnormal}, whose statement we recall for the reader's convenience:\\

\noindent \textbf{Theorem 1.3}\emph{\bn
\item If  $(M,\phi)$  is a non-fibered directed $3$-manifold, then $\ker(\phi)$ admits a finite index subgroup with infinite normal rank.
\item There exists a non-fibered directed $3$-manifold $(M,\phi)$, such that $\ker(\phi)$ has finite normal rank.
\en}

\begin{proof}
We first note that if $\pi$ is any group, then any set of elements which normally generates $\pi$
is also a generating set of $H_1(\pi;\Z)$. It thus follows that
\[ n(\pi)\geq b_1(\pi).\]

If  $(M,\phi)$  is a non-fibered directed $3$-manifold, then
by Theorem \ref{mainthm}  there exists an epimorphism $\a\co \pi_1(M)\to G$ onto a finite group $G$ such that
\[ \Delta_{M,\phi}^\a =0.\]
We write $\ti{\pi}:=\ker(\a)$ and  we denote by $\ti{M}$ the cover of $M$ corresponding to $\ti{\pi}$.
As in the proof of Theorem \ref{mainthm2} we note that $\phi(\ti{\pi})=d\Z$ for some $d\ne 0\in \Z$
and we write $\ti{\phi}:=\frac{1}{d}p^*(\phi)\in \hom(\ti{\pi},\Z)=H^1(\ti{M};\Z)$.
Note that $\Delta_{M,\phi}^\a =0$ implies by Lemma \ref{lem1} that $\Delta_{\ti{M},\ti{\phi}}=0$.
This in turn is equivalent to saying that $H_1(\ti{M};\qt)$ is not $\qt$-torsion,
i.e.
\[ \dim(H_1(\mbox{$\ti{\phi}$-cover of $\ti{M}$};\Q))=\infty.\]
We thus see that  $b_1(\ker(\a\times \phi))=\infty$, i.e. $n(\ker(\a\times \phi))=\infty$.
Since $\ker(\a\times \phi)$ is a finite index subgroup of $\ker(\phi)$ this concludes the proof of (1).

We now turn to the proof of (2).
Let $(N,\psi)$ be a fibered directed $3$-manifold with $N\ne S^1\times D^2$.
We denote the fiber surface by $S$ and the monodromy by $\varphi$.
We can then identify $N$ with $(S\times [0,1])/(x,0)\sim (\varphi(x),1)$.
We pick an essential simple closed curve $C$ on $S\times \frac{1}{2}$ and we pick an open tubular neighborhood $\nu C$ of $C$
in $S\times (0,1)$. We furthermore pick a non-trivial knot $K\subset S^3$.
We then consider the $3$-manifold
\[ M:=(N\sm \nu C)\cup (S^3\sm \nu K)\]
where we glue the meridian of $K$ to a push-off of $C$ in $S\times \frac{1}{2}$ and where we glue the longitude of $K$ to a meridian of $C$.
We denote by $\phi\in H^1(M;\Z)$ the class which is dual to $S\times 0\subset M$.

We claim that $(M,\phi)$ has all the desired properties.
We denote by $\ti{M}$ the infinite cyclic cover  of $M$ corresponding to $\phi$.
Given $i\in \Z$ we write
\[ W_i:=\left((S\times [0,1]\sm \nu C)\cup (S^3\sm \nu K)\right)\times i.\]
Note that we can canonically identify $\ti{M}$ with
\[  \left(\bigcup\limits_{i\in \Z} W_i\right)/(x,i)\sim (\varphi(x),i+1).\]
Also note that $\ti{M}$ contains the incompressible tori $\partial \ol{\nu C}\times i$, in particular $\pi_1(\ti{M})$ is not a surface group.
It thus follows that $\phi$ is not a fibered class.

We now denote by $\Gamma$ the smallest normal subgroup of $\pi_1(\ti{M})=\ker(\phi)$ which contains $\pi_1(S\times 0)$.
We are done with the proof of Theorem \ref{mainthmnormal} once we showed that $\Gamma=\ker(\phi)$.
First note that $C$ and hence the meridian of $K\times 0$ lies in $\Gamma$. Since the meridian of $K$ normally generates
$\pi_1(S^3\sm \nu K)$ it follows that the longitude of $K\times 0$ also lies in $\Gamma$. It is now straightforward to see that
$\pi_1(W_0)\subset \Gamma$. This in particular implies that $\pi_1(S\times 1)$ lies in $\Gamma$.
But then the same argument as above shows that $\pi_1(W_1)\subset \Gamma$. Iterating this argument we see that $\pi_1(W_i)$ lies in $\Gamma$ for all $i\in \N$. Almost the same argument also shows that $\pi_1(W_i)$ lies in $\Gamma$ for all $i\in \Z_{\leq 0}$.
It now follows that $\pi_1(\ti{M})$ is contained in $\Gamma$.
\end{proof}

%=========================================================


\begin{thebibliography}{10}

%\bibitem[Ag12]{Ag12}
%I. Agol, {\em The virtual Haken conjecture}, with an appendix by I. Agol, D. Groves and J. Manning, preprint (2012).
\bibitem[AFW12]{AFW12}
M. Aschenbrenner, S. Friedl and H. Wilton,
{\em 3-manifold groups}, preprint (2012), arXiv:1205.0202
\bibitem[Bon86]{Bon86}
F. Bonahon, {\em Bouts des vari\'et\'es hyperboliques de dimension $3$}, Ann. of Math. (2), 124
(1986), 71--158.
\bibitem[BZ84]{BZ84}
 M. Boileau and H. Zieschang, {\em Heegaard genus of closed orientable Seifert $3$-manifolds},
Invent. Math. 76 (1984), 455--468.

\bibitem[Ch03]{Ch03} J. Cha, {\em Fibred knots and twisted Alexander invariants},
Transactions of the AMS 355: 4187--4200 (2003)
\bibitem[De13]{De13}
J. DeBlois, {\em Explicit rank bounds for cyclic covers}, preprint (2013), arXiv:1310.7823
\bibitem[Fr13]{Fr13}
S. Friedl, {\em Twisted Reidemeister torsion, the Thurston norm and fibered manifolds}, preprint (2013), 
to be published by Geometriae Dedicata.
\bibitem[FK06]{FK06} S. Friedl and T. Kim,
{\em The Thurston norm, fibered manifolds and twisted Alexander polynomials}, Topology 45, 929--953 (2006)
%\bibitem[FLM09]{FLM09}
%S. Friedl, C. Leidy and L. Maxium, {\em  $L^2$--Betti numbers of plane algebraic curves},
%Michigan Mathematical Journal, 58 (2009), no. 2, 291-301.
\bibitem[FSW13]{FSW13}
S. Friedl, D. Silver and S. Williams, {\em Splittings of knot groups}, preprint (2013)
\bibitem[FV08a]{FV08a}
S. Friedl and  S. Vidussi, {\em Twisted Alexander polynomials and symplectic structures},
Am. J. Math. 130(2), 455--484 (2008).
\bibitem[FV08b]{FV08b}
S. Friedl and  S. Vidussi, {\em Symplectic $S^{1} \times N^3$,
surface subgroup separability, and vanishing Thurston
norm},
J. Amer. Math. Soc. 21 (2008), 597-610.
 \bibitem[FV10]{FV10}
S. Friedl and  S. Vidussi,
 {\em A survey of twisted Alexander polynomials}, The Mathematics of Knots: Theory and Application (Contributions in Mathematical and Computational Sciences), editors: Markus Banagl and Denis Vogel (2010), p. 45-94.
  \bibitem[FV11a]{FV11a}
S. Friedl and  S. Vidussi,
 {\em Twisted Alexander polynomials detect fibered 3--manifolds}, Annals of Math. 173 (2011), 1587-1643
  \bibitem[FV11b]{FV11b}
S. Friedl and  S. Vidussi,
 {\em Twisted Alexander polynomials and fibered 3-manifolds},
Low-Dimensional and Symplectic Topology, Proc. Sympos.  Pure Math. 82 (2011), 111-130.
  \bibitem[FV12]{FV12}
S. Friedl and  S. Vidussi,
 {\em A Vanishing Theorem for Twisted Alexander Polynomials with Applications to Symplectic 4-manifolds}, J. Eur. Math. Soc. 16 (2013) no. 6, 2027--2041
\bibitem[GMRS98]{GMRS}
R. Gitik, M.  Mitra, E.  Rips and M.  Sageev, {\em
Widths of subgroups},
Trans. Amer. Math. Soc. 350 (1998), no. 1, 321--329.

\bibitem[GKM05]{GKM05} H. Goda, T. Kitano and  T. Morifuji, {\em Reidemeister Torsion, Twisted Alexander Polynomial and Fibred Knots}, Comment. Math. Helv.  80,  no. 1: 51--61 (2005)
%\bibitem[Ha02]{Hatcher_AT}
%A. Hatcher, {\em Algebraic topology}, Cambridge University Press (2002)
\bibitem[He76]{He76} J. Hempel, {\em $3$-Manifolds},
Ann. of Math. Studies, no. 86. Princeton University Press, Princeton, N. J., 1976.
\bibitem[Jo]{Jo}
J. Johnson, {\em Notes on Heegaard splittings},\\
\texttt{http://www.math.okstate.edu/\~\,jjohnson/notes.pdf}
\bibitem[KL99]{KL99}
P. Kirk and C. Livingston, {\em Twisted Alexander invariants,
Reidemeister torsion and Casson--Gordon invariants}, Topology
\textbf{38} (1999), no. 3, 635--661.
\bibitem[KM05]{KM05}
T. Kitano and  T.  Morifuji, {\em Divisibility of twisted Alexander polynomials and fibered knots},  Ann. Sc. Norm. Super. Pisa Cl. Sci. (5)  4  (2005),  no. 1, 179--186.
\bibitem[Ki07]{Ki07}
T. Kitayama, {\em Normalization of twisted Alexander invariants}, preprint (2007), 	arXiv:0705.2371.
\bibitem[La05]{La05}
M. Lackenby, {\em Expanders, rank and graphs of groups}, Israel J. Math., 146:357--370, 2005.
\bibitem[La06]{La06}
M. Lackenby, {\em Heegaard splittings, the virtually Haken conjecture and property ($\tau$)}, Invent.
Math., 164(2):317--359, 2006.
\bibitem[Li11]{Li11}
T. Li, {\em Rank and genus of $3$-manifolds}, J.~Amer.~Math.~Soc. ~26 (2013), no.~3, 777--829.
\bibitem[Lin01]{Lin01} X. S. Lin, {\em Representations of knot groups and twisted Alexander polynomials}, Acta Math. Sin. (Engl. Ser.)  17,  no. 3: 361--380 (2001)
%\bibitem[MKS76]{MKS76}
%W. Magnus, A. Karrass and D. Solitar, {\em Combinatorial Group Theory:
%Presentations of Groups in Terms of Generators and Relations}, Second %revised
%edition. Dover Publications, Inc., New York (1976)
\bibitem[NS09]{NS09}
H. Namazi and J. Souto, {\em Heegaard splittings and pseudo-Anosov maps}, Geom. Funct. Anal. 19 (2009), 1195--1228.
\bibitem[Sc02]{Sc02}
M. Scharlemann, {\em Heegaard splittings of compact 3-manifolds}, Handbook of geometric topology, ed. by R. Daverman and R. Sherr, 921--953, North-Holland, Amsterdam, 2002.
\bibitem[SWe07]{ScWe07}
J.  Schultens and R. Weidmann, {\em On the geometric and the algebraic rank
of graph manifolds}, Pacific J. Math. 231 (2007), 481--510.
\bibitem[SWa79]{ScoWa}
P. Scott and T. Wall, {\em Topological methods in group theory}, in
Homological group theory, {P}roc. {S}ympos., {D}urham, 1977,
  London Math. Soc. Lecture Note Ser. 65 (1979), 137--203.
\bibitem[Se99]{Sela}
Z. Sela, {\em Endomorphisms of hyperbolic groups. {I}. {T}he {H}opf
              property}, Topology 38 (1999), 301--321.
\bibitem[Ser80]{Serre} J.-P. Serre, {\em Trees}, Springer-Verlag, Berlin-New York, 1980.
\bibitem[SW09a]{SW09a}
D. Silver and S. Williams, {\em Nonfibered knots and representation shifts}, Proceedings of the Postnikov
Memorial Conference, Banach Center Publications, 85 (2009), 101--107.
\bibitem[SW09b]{SW09b}
D. Silver and S. Williams, {\em Twisted Alexander Polynomials and Representation Shifts}, Bull.
London Math. Soc., 41 (2009), 535--540.
\bibitem[So08]{So08}
J. Souto, {\em  The rank of the fundamental group of certain hyperbolic $3$-manifolds
fibering over the circle}, Geom. Topol. Monogr., 14, (2008), 505--518.

\bibitem[St62]{St62}
J. Stallings, {\em On fibering certain 3--manifolds}, 1962 Topology
of 3--manifolds and related topics (Proc. The Univ. of Georgia
Institute, 1961) pp. 95--100 Prentice-Hall, Englewood Cliffs, N.J.
(1962)
%\bibitem[Th86]{Th86} W. P. Thurston, {\em A norm for the homology of 3--manifolds}, Mem.
%Amer. Math. Soc. 339: 99--130 (1986).
\bibitem[Tr80]{Tretkoff}
M. Tretkoff, {\em A topological approach to the theory of groups acting on
              trees}, J. Pure Appl. Algebra 16 (1980), 323--333.
%\bibitem[Tu01]{Tu01} V. Turaev, {\em Introduction to combinatorial torsions}, Birkh\"auser, Basel, (2001)
\bibitem[Wa94]{Wa94}
M. Wada, {\em Twisted Alexander polynomial for finitely presentable groups}, Topology 33, no. 2:
241--256 (1994)
\bibitem[We02]{Weidmann}
R. Weidmann, {\em The {N}ielsen method for groups acting on trees},
Proc. London Math. Soc. (3) 85 (2002), 93--118.
\bibitem[We03]{We03}
T.~Weidmann, {\em Some {3}-manifolds with {2}-generated fundamental group},
Arch.~Math.~(Basel) 81 (2003), no.~5, 589--595.
\bibitem[Wi09]{Wi09}
D. Wise, {\em The structure of groups with a quasiconvex hierarchy}, Electronic Res. Ann. Math. Sci.,
Volume 16, Pages 44-55 (2009)
%\bibitem[Wi11]{Wi11}
%D. Wise, {\em From riches to RAAGs: 3--manifolds, right--angled Artin groups, and cubical geometry},
%lecture notes for the NSF-CBMS conference August 1-5, 2011 held at the CUNY Graduate Center, 78 pages (2011)


\bibitem[Wi12a]{Wi12a}
D. Wise, {\em The structure of groups with a quasi-convex hierarchy}, 189 pages, preprint (2012),
downloaded on October 29, 2012 from \\
\texttt{http://www.math.mcgill.ca/wise/papers.html}

\bibitem[Wi12b]{Wi12b}
D. Wise, {\em From riches to RAAGs: $3$-manifolds, right--angled Artin groups, and cubical geometry}, CBMS Regional Conference Series in Mathematics, 2012.
\end{thebibliography}
\end{document}